\documentclass[12pt, twoside]{article}
\usepackage{graphicx}
\usepackage{tikz}
\usepackage{authblk}
\usepackage{amssymb,amsmath,amsthm,latexsym}
\usepackage{amsfonts}
\usepackage{enumerate}
\newtheorem{thm}{Theorem}[section]

\newtheorem{cor}[thm] {Corollary}
\newtheorem{lem} [thm]{Lemma}

\theoremstyle{definition} 

\newtheorem{ex}[thm]{Example}
\newtheorem{rmk}[thm] {Remark}
\newtheorem{defn}[thm]{Definition}
\parindent=0pt
\parskip= 4.5 pt
\lineskip=3pt \oddsidemargin=10mm \evensidemargin=10mm
\topmargin=35pt \headheight=12pt \footskip=30pt \textheight 8.1in
\textwidth=150mm \raggedbottom \pagestyle{myheadings} \hbadness = 10000 \tolerance = 10000

\numberwithin{equation}{section}

\newcommand\mdd{\operatorname{mdd}}
\newcommand\ext{\operatorname{ex}}
\newcommand\ter{\operatorname{ter}}

\voffset=-12mm
\parindent=12pt
\oddsidemargin=11mm
\evensidemargin=11mm
\topmargin=55pt
\headheight=12pt
\footskip=30pt
\textheight 8.1in
\textwidth=150mm
\raggedbottom
\pagestyle{myheadings}
\hbadness = 10000
\tolerance = 10000

\begin{document}
	
		\title
		{
			\textbf{On the Metric Dimension\\  of Signed Graphs
				}
		}
			
\author[1]{ Shahul Hameed K}
\author[2]{Remna K P}
\author[2]{Divya T}
\author[3] {Biju K}
\author[2]{\\ Rajeevan P}
\author[2] {Santhosh G O}
\author[1] {Ramakrishnan K O}

\affil[1]{\footnotesize Department of
				Mathematics, K M M Government\ Women's\ College, Kannur - 670004, Kerala, India.  E-mail: shabrennen@gmail.com}
\affil[2]{\footnotesize Department of
			Mathematics, Government\ Brennen\ College, Kannur - 670106,\ Kerala,  \ India.}
\affil[3]{\footnotesize Department of
			Mathematics, P R N S S\ College, Mattannur, Kannur - 670702,\ Kerala,  \ India.}

\maketitle
\thispagestyle{empty}
\begin{abstract}
A signed graph $\Sigma$ is a pair $(G,\sigma)$, where $G=(V,E)$ is the underlying graph in which each edge is assigned $+1$ or $-1$ by the signature function $\sigma:E\rightarrow\{-1,+1\}$. In this paper, we extend the extensively applied concepts of metric dimension and resolving sets for unsigned graphs to signed graphs. We analyze the metric dimension of some well known classes of signed graphs including a special case of signed trees. Among other things, we establish that the metric dimension of a signed graph is invariant under negation.
\end{abstract}

\textbf{Key Words:} Signed graph, Signed distance, Distance compatibility, Resolving Set, Metric Dimension.

\textbf{Mathematics Subject Classification (2010):}  05C22, 05C50.
\section{Introduction}
A signed graph $\Sigma=(G,\sigma)$ is a graph $G=(V,E)$ together with a signature function $\sigma:E\rightarrow \{1,-1\}$. In this paper, we extend the definition of resolving sets and metric dimension of unsigned graphs to signed graphs and compute metric dimensions of some classes of graphs and a special class of signed trees. The signed distance concept for signed graphs introduced in \cite{sdist} by Shahul Hameed K et al., will be utilized for this purpose.

All graphs in this paper are simple, finite and connected. The distance between two vertices $u$ and $v$ in the underlying graph $G$ will be usually denoted by $d(u,v)$ which is defined as the length of a shortest path between them. If $\Sigma=(G,\sigma)$ is a signed graph, we denote the sign of a path $P$ in $\Sigma$ by $\sigma(P)=\prod_{e\in P}\sigma(e)$. One of the shortest paths from a vertex $u$ to a vertex $v$ is denoted by $P(u,v)$ and collection of all such paths is denoted by $\mathcal{P}(u,v)$. There may be more than one paths between two vertices in a non-geodetic graph and as such it is defined \cite{sdist}  $\sigma_{\max}(uv)$ as $-1$ if $\sigma (P(u,v))=-1\  \forall \ P(u,v)\in \mathcal{P}(u,v)$ and $+1$, otherwise. Similarly $\sigma_{\min}(uv)$ as $+1$ if $\sigma (P(u,v))=+ 1 \ \forall  \  P(u,v) \in \mathcal{P}(u,v)$ and $-1$, otherwise. A signed graph is said to be homogeneous if all the edges have the same sign. Otherwise it is called non-homogeneous.

Given a connected signed graph $\Sigma=(G,\sigma)$ with $G=(V,E)$, there are two types of signed distances $d^{\max}$ and $d^{\min}$~\cite{sdist}, defined by (i) $d^{\max}(u,v)= \sigma_{\max}(uv)d(u,v)$ and (ii) $d^{\min}(u,v)= \sigma_{\min}(uv)d(u,v)$ for all $u,v\in V$.	 Two vertices $u$ and $v$ in a connected signed graph $\Sigma$ are said to be (distance) compatible~\cite{sdist}, if $d^{\max}(u,v)=d^{\min}(u,v)$. $\Sigma$ itself is said to be distance compatible or simply compatible~\cite{sdist}, if every pair of its vertices is distance compatible. In the case of a compatible signed graph $\Sigma$, we denote the common value of $d^{\max}(u,v)$ and $ d^{\min}(u,v)$ simply by $d_{\Sigma}(u,v)$ or, if there is no scope for confusion, by $d(u,v)$, as in the case of ordinary graphs. Also, in the case of a connected compatible signed graph $\Sigma$, we denote by $\sigma(uv)$, not only the edge sign but also the common value of $\sigma_{\max}$ and $\sigma_{\min}$.

The sign of a cycle $C$, denoted by $\sigma(C)$, in a signed graph $\Sigma=(G,\sigma)$ is the product of its edge signs. i.e., $\sigma(C)=\prod_{e\in E(C)}\sigma(e)$. A cycle $C$ in a signed graph is said to be positive if $\sigma(C)=1$. A signed graph is defined to be balanced if every cycle in it is positive. Given a signed graph $\Sigma=(G,\sigma)$, its negation is the signed graph $-\Sigma=(G,-\sigma)$. A signed graph $\Sigma$ is said to be anti-balanced if its negation $-\Sigma$ is balanced. Geodetic graphs are those graphs in which every pair of vertices will be joined by a unique shortest path. Balanced signed graphs, anti-balanced signed graphs and geodetic signed graphs are all examples of compatible signed graphs \cite{sdist}. A characterization of compatible signed graphs is given in \cite{sdist1}, as follows.
\begin{thm}[\cite{sdist1}]\label{compcycle}
A $2$-connected, non-geodetic signed graph is compatible if and only if it has no even negatively signed cycle $C_{2k}$ in which there are two diametrically opposite vertices with distance $k$.
\end{thm}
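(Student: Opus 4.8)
The plan is to first replace the $d^{\max}$/$d^{\min}$ formalism by a transparent combinatorial criterion and then argue the two directions by contrapositive. From the definitions, for a pair $u\neq v$ one has $d^{\max}(u,v)=d^{\min}(u,v)$ iff $\sigma_{\max}(uv)=\sigma_{\min}(uv)$, and unwinding the definitions of $\sigma_{\max}$ and $\sigma_{\min}$ this equality holds exactly when all shortest $u$–$v$ paths carry a common sign (all positive, or all negative). Hence $\Sigma$ is \emph{incompatible} precisely when some pair of vertices is joined by two geodesics of opposite sign. So I would restate the target as: such an opposite-sign pair of geodesics exists iff $\Sigma$ contains a negative $C_{2k}$ with two diametrically opposite vertices at mutual distance $k$.

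The backward implication is the routine one. Given a negative even cycle $C_{2k}$ with diametrically opposite vertices $a,b$ satisfying $d(a,b)=k$, the two $a$–$b$ arcs of the cycle each have length $k=d(a,b)$ and are therefore both geodesics; since their signs multiply to $\sigma(C_{2k})=-1$, they have opposite signs. Thus $a,b$ is an incompatible pair and $\Sigma$ is incompatible.

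For the forward implication I would use an extremal argument. Assume $\Sigma$ is incompatible and choose a pair $u,v$ at \emph{minimum} distance $k=d(u,v)$ among all incompatible pairs, together with a positive geodesic $P_1$ and a negative geodesic $P_2$ joining them. The key step is to show that $P_1$ and $P_2$ are internally disjoint. If they shared an internal vertex $w$, then (since $w$ sits at the same distance from $u$ along either geodesic) both geodesics split into subpaths $P_i[u,w]$ and $P_i[w,v]$, each of length strictly less than $k$; comparing $\sigma(P_1)=\sigma(P_1[u,w])\,\sigma(P_1[w,v])$ with the analogous factorization of $\sigma(P_2)$ shows that the signs must already disagree on one of the two shorter segments, producing a strictly closer incompatible pair and contradicting minimality. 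Once internal disjointness is established, $C:=P_1\cup P_2$ is a genuine cycle of length $2k$ with $\sigma(C)=\sigma(P_1)\sigma(P_2)=-1$, and $u,v$ are diametrically opposite on it at distance $k$, which is exactly the forbidden configuration.

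The main obstacle is precisely the internal-disjointness claim in the forward direction: extracting a single simple \emph{cycle} (rather than merely a negative closed walk) of the correct even length and with the diametric property forces the minimal-distance choice, and one must check carefully that the split at a shared vertex really yields \emph{distinct} opposite-sign geodesics between a nearer pair. The $2$-connected, non-geodetic hypotheses play mostly a framing role: non-geodeticity guarantees that some pair admits more than one geodesic (so the statement is not vacuous), while the structural argument above otherwise proceeds directly from the definitions.
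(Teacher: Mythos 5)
The paper does not actually prove Theorem~\ref{compcycle}: it is imported verbatim from the reference \cite{sdist1} (a communicated manuscript), so there is no in-paper argument to compare yours against. Judged on its own terms, your proof is correct and essentially complete. The reduction of compatibility to ``no pair of vertices is joined by two geodesics of opposite sign'' follows immediately from the definitions of $\sigma_{\max}$ and $\sigma_{\min}$; the backward direction is exactly as you say; and the forward direction's extremal argument is sound: if $P_1$ and $P_2$ share an internal vertex $w$, then since subpaths of geodesics are geodesics both $P_i[u,w]$ have length $d(u,w)$ and both $P_i[w,v]$ have length $d(w,v)$, and $\sigma(P_1)\neq\sigma(P_2)$ forces a sign disagreement on one of the two segments, yielding an incompatible pair at strictly smaller distance. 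The distinctness worry you flag resolves itself, since two geodesics of opposite sign are automatically distinct, and the degenerate case $d(u,v)=1$ cannot occur for an incompatible pair in a simple graph. Your observation about the hypotheses is also right: non-geodeticity only rules out the vacuous case, and $2$-connectedness is never used in your argument (it is presumably an artifact of how the cited paper frames its characterization). The one presentational gap is that the argument is written in the conditional mood as a plan; turning it into a proof requires no new ideas, only writing out the factorization step explicitly.
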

Resolving sets and basis for unsigned graphs were first introduced in the $1970$s by Slater \cite{slater} and independently by Harary and Melter \cite{harary}. There are many physical applications for these concepts on which extensive research is still on.  The reader may refer to \cite{r1, chartrand, harary, haup, kuller, slater, slater1, shan} for more details. In order to extend these concepts to the case of distance compatible signed graphs, first of all, let us have the following definitions.

Given a compatible signed graph $\Sigma=(V,E,\sigma)$, with respect to an ordered subset $W=\{w_1,w_2,\cdots w_k\}\subset V$of cardinality $k$, every vertex of $\Sigma$ has a $k$-vector representation as $r_{\Sigma}(v|W)=(d_\Sigma(v,w_1),d_\Sigma(v,w_2),\cdots d_\Sigma(v,w_k) )$ called the metric representation of the vertex $v$. If there is no scope for confusion, $r_{\Sigma}(v|W)$ will be simply denoted by $r(v|W)$. An ordered subset $W$ of the vertex set $V$ of a compatible signed graph $\Sigma=(V,E,\sigma)$ is called a resolving set if $r(v_i|W)\neq r(v_j|W)$ for all vertices $v_i\neq v_j$ in $V$.		
A resolving set $W$ of a compatible signed graph $\Sigma$ is called its basis if $W$ is a minimum resolving set. The cardinality of a basis is called the metric dimension of that signed graph, denoted by $\dim(\Sigma)$.

Regarding the organization of the paper, Section~\ref{s1} deals with some basic but important theorems regarding the resolving sets and the metric dimension of signed graphs including the metric dimension of such signed graphs like signed paths, signed cycles and signed stars. Section~\ref{s2} and Section~\ref{s3}, respectively, dwell upon exclusively with the metric dimension of signed wheels and signed trees. Unless otherwise mentioned all the signed graphs under consideration are distance compatible.
\section{Main Results}\label{s1}
\begin{thm}\label{undres} Every resolving set of the underlying graph $G$ is a resolving set for the signed graph $\Sigma=(G,\sigma)$.
\end{thm}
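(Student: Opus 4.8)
The plan is to exploit the fact that the signed distance in a compatible signed graph is, up to a sign, exactly the underlying graph distance. Precisely, for any two vertices $u,v$ of $\Sigma$ we have $d_{\Sigma}(u,v)=\sigma(uv)\,d(u,v)$ with $\sigma(uv)\in\{+1,-1\}$, and hence $|d_{\Sigma}(u,v)|=d(u,v)$. This single observation is what links the signed metric representation $r_{\Sigma}(\cdot\,|\,W)$ to the ordinary metric representation $r(\cdot\,|\,W)$ coordinate by coordinate.

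First I would fix a resolving set $W=\{w_1,w_2,\dots,w_k\}$ of the underlying graph $G$ and take an arbitrary pair of distinct vertices $v_i\neq v_j$ in $V$. Because $W$ resolves $G$, their ordinary metric representations differ, so there is some index $l\in\{1,\dots,k\}$ with $d(v_i,w_l)\neq d(v_j,w_l)$.

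Next I would pass to signed distances in this coordinate. Using the observation above, $|d_{\Sigma}(v_i,w_l)|=d(v_i,w_l)$ and $|d_{\Sigma}(v_j,w_l)|=d(v_j,w_l)$, and these two absolute values are unequal by the choice of $l$. Two real numbers with distinct absolute values are themselves distinct, so $d_{\Sigma}(v_i,w_l)\neq d_{\Sigma}(v_j,w_l)$. Therefore the signed metric representations $r_{\Sigma}(v_i|W)$ and $r_{\Sigma}(v_j|W)$ differ in the $l$-th coordinate, which shows that $W$ resolves $\Sigma$.

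Since the proof reduces to a coordinatewise absolute-value comparison, I do not expect any genuine obstacle. The only point requiring care is that $d_{\Sigma}$ is well defined at all: this is exactly the standing compatibility hypothesis, under which $d^{\max}$ and $d^{\min}$ coincide and their common value satisfies $|d_{\Sigma}(u,v)|=d(u,v)$. Granting that, the argument is immediate. It is worth noting that the converse need not hold, since distinct signed distances may share a common absolute value; this is precisely why $\Sigma$ can, in principle, admit a smaller resolving set than $G$, a phenomenon the theorem leaves room for.
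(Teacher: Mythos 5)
Your argument is correct and is essentially the paper's proof read in the contrapositive direction: the paper assumes two vertices have equal signed representations and deduces equal underlying representations via $|d_{\Sigma}(u,v)|=d(u,v)$, while you start from a differing coordinate in $G$ and push it forward through the same absolute-value identity. The mathematical content is identical, so no further comparison is needed.
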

\begin{proof} Let $V=\{v_1, v_2, \dots , v_n\}$ be the vertex set of $G$ and $W=\{v_{i_1}, v_{i_2}, \dots, v_{i_k}\}$ be the resolving set of $G$.  Then $r(v_p|W)\neq r(v_q|W) \ \ \forall  \ v_p\neq v_q \in V$. If possible assume that $W$ is not a resolving set of $\Sigma$.  Then $r_\Sigma(v_s|W)=r_\Sigma(v_t|W)$ for some $v_s, v_t\in V$.  i.e., $(d_\Sigma(v_s, v_{i_1}), \dots, d_\Sigma(v_s, v_{i_k}))=(d_\Sigma(v_t, v_{i_1}), \dots, d_\Sigma(v_t, v_{i_k}))$. This gives, $d_\Sigma(v_s, v_{i_x})= d_\Sigma(v_t, v_{i_x})\ {\text{for}}\ x=1,2,\dots, k$. Hence, $|d_\Sigma(v_s, v_{i_x})|= |d_\Sigma(v_t, v_{i_x})| \ {\text{for}}\ x=1,2,\dots, k$. So, $d_G(v_s, v_{i_x})= d_G(v_t, v_{i_x}) \ {\text{for}}\ x=1,2,\dots, k$ making $r_G(v_s|W) = r_G(v_t|W)  \ \text{for some} \ v_s, v_t \in V(G)$, leading to a contradiction to the assumption.\ \ Hence, $W$ is a resolving set for the signed graph $\Sigma$ also.
\end{proof}
\begin{thm}\label{gandsigma}
For a signed graph $\Sigma=(G,\sigma)$, $1\le\dim(\Sigma)\le \dim(G)\le n-1$.
\end{thm}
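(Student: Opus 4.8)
The plan is to establish the chain of inequalities $1 \le \dim(\Sigma) \le \dim(G) \le n-1$ by treating each of the three inequalities separately, since they have genuinely different characters.

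The middle inequality $\dim(\Sigma) \le \dim(G)$ is the heart of the statement, and it follows almost immediately from the previous theorem. By Theorem~\ref{undres}, any resolving set of the underlying graph $G$ is also a resolving set of $\Sigma$. In particular, if $W$ is a basis of $G$ (a \emph{minimum} resolving set), then $W$ is a resolving set of $\Sigma$ of cardinality $\dim(G)$. Since $\dim(\Sigma)$ is by definition the cardinality of a \emph{minimum} resolving set of $\Sigma$, we get $\dim(\Sigma) \le |W| = \dim(G)$.

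For the left inequality $1 \le \dim(\Sigma)$, I would argue that any connected signed graph on $n \ge 2$ vertices requires at least one vertex in any resolving set: the empty set yields the empty (zero-length) representation for every vertex, so it cannot distinguish two distinct vertices, forcing $\dim(\Sigma) \ge 1$. The right inequality $\dim(G) \le n-1$ is a standard fact about unsigned graphs, which I would either cite from the resolving-set literature (e.g.\ \cite{chartrand, harary, slater}) or justify directly: taking $W = V \setminus \{v\}$ for any vertex $v$, each vertex $w \in W$ has a coordinate $d_G(w,w)=0$ in its own representation while every other vertex has a nonzero value in that coordinate, so $W$ resolves $G$; thus $\dim(G) \le n-1$.

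The only point requiring a little care is the lower bound $\dim(\Sigma)\ge 1$, where I must confirm that the convention of an empty representation genuinely fails to resolve, and that connectedness with $n\ge2$ guarantees at least two distinct vertices to separate; the remaining steps are direct appeals to Theorem~\ref{undres} and the classical unsigned bound, so I do not anticipate any real obstacle.
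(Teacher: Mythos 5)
Your proposal is correct and follows essentially the same route as the paper, whose proof is just a one-line appeal to the definitions (implicitly resting on Theorem~\ref{undres} for the middle inequality, exactly as you make explicit). Your version simply spells out the three inequalities separately, including the routine bounds $\dim(\Sigma)\ge 1$ and $\dim(G)\le n-1$, which the paper leaves unstated.
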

\begin{proof}
The proof follows directly from the definitions since the co-ordinates for the metric representation of a vertex can include negative integers also in the case of a signed graph whereas that of the underlying graph allow only positive integers.
\end{proof}
In view of the above theorem, we define the metric dimensional difference of a signed graph $\Sigma=(G,\sigma)$, denoted by $\mdd(\Sigma)$, as $\mdd(\Sigma)=\dim(G)-\dim(\Sigma)$. Thus $\mdd(\Sigma)\ge 0$.
The following theorem says that negating a signed graph does not change the dimension.
\begin{thm}\label{negate}
	If $\Sigma$ is a compatible signed graph, then $\dim(\Sigma)=\dim(-\Sigma)$
\end{thm}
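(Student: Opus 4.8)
The plan is to prove the stronger statement that a subset $W\subseteq V$ is a resolving set of $\Sigma$ if and only if it is a resolving set of $-\Sigma$. Once the two families of resolving sets are shown to coincide, their minimum cardinalities agree and the equality $\dim(\Sigma)=\dim(-\Sigma)$ follows immediately. The engine of the argument is a clean relation between the signed distances in $\Sigma$ and in $-\Sigma$.

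First I would establish, for every pair $u,v\in V$, the identity
\[
d_{-\Sigma}(u,v)=(-1)^{d(u,v)}\,d_\Sigma(u,v),
\]
where $d(u,v)$ is the distance in the underlying graph $G$. Indeed, every shortest $u$–$v$ path $P$ has exactly $d(u,v)$ edges, so negating all edge signs multiplies $\sigma(P)$ by $(-1)^{d(u,v)}$. Hence the sign of each shortest $u$–$v$ path in $-\Sigma$ is $(-1)^{d(u,v)}$ times its sign in $\Sigma$. Since $\Sigma$ is compatible, all shortest $u$–$v$ paths share one common sign $\sigma(uv)$; multiplying them uniformly by the single factor $(-1)^{d(u,v)}$ keeps them equal, so $\sigma_{\max}$ and $\sigma_{\min}$ still coincide in $-\Sigma$. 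This simultaneously shows that $-\Sigma$ is compatible (so that $\dim(-\Sigma)$ is even defined) and that the displayed identity holds.

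Next I would exploit this identity to compare metric representations. Fix any ordered set $W=\{w_1,\dots,w_k\}$ and two vertices $u,v$. I claim $r_\Sigma(u|W)=r_\Sigma(v|W)$ holds exactly when $r_{-\Sigma}(u|W)=r_{-\Sigma}(v|W)$. For one direction, assume $r_{-\Sigma}(u|W)=r_{-\Sigma}(v|W)$. Taking absolute values coordinatewise and using $|d_{-\Sigma}(x,w_i)|=d(x,w_i)$ gives $d(u,w_i)=d(v,w_i)$ for every $i$, so the factors $(-1)^{d(u,w_i)}$ and $(-1)^{d(v,w_i)}$ agree. Cancelling these equal, nonzero factors from $ (-1)^{d(u,w_i)}d_\Sigma(u,w_i)=(-1)^{d(v,w_i)}d_\Sigma(v,w_i) $ yields $d_\Sigma(u,w_i)=d_\Sigma(v,w_i)$ for all $i$, i.e. $r_\Sigma(u|W)=r_\Sigma(v|W)$. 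The reverse implication is the same argument applied to $-(-\Sigma)=\Sigma$, or obtained directly by substituting into the distance identity.

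With the claim in hand, $W$ leaves a pair $u\neq v$ unresolved in $\Sigma$ precisely when it leaves that same pair unresolved in $-\Sigma$; equivalently, $W$ resolves $\Sigma$ if and only if it resolves $-\Sigma$. Thus the two signed graphs have identical families of resolving sets, hence the same minimum size, giving $\dim(\Sigma)=\dim(-\Sigma)$. I expect the only delicate point to be the first step: one must check carefully that negation preserves compatibility, so that the signed distances of $-\Sigma$ are well defined, and that the parity factor $(-1)^{d(u,v)}$ is applied uniformly across all shortest paths between a given pair. The subsequent cancellation and the reduction to equality of dimensions are then routine.
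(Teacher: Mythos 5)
Your proposal is correct and follows essentially the same route as the paper: both arguments rest on the identity $\sigma'(P(u,v))=(-1)^{d(u,v)}\sigma(P(u,v))$, and your absolute-value cancellation is just a tidier packaging of the paper's three-way parity case analysis, while your observation that the resolving sets of $\Sigma$ and $-\Sigma$ coincide replaces the paper's final minimality step via $-(-\Sigma)=\Sigma$. A small bonus in your version is that you explicitly verify that $-\Sigma$ remains compatible, a point the paper leaves implicit.
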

\begin{proof} Let $\Sigma=(G,\sigma)$ be the given signed graph and let $-\Sigma=(G,\sigma')$ be its negation where $\sigma'=-\sigma$. The sign of a shortest path $P(u,v)$ has the following relation under these two signatures.
	\begin{equation}\label{eq1}
	\sigma'(P(u,v))=(-1)^{d(u,v)}\sigma(P(u,v))
	\end{equation}
Let $\dim(\Sigma)=k$ and $W=\{w_1,w_2,\cdots,w_k\}$ be a basis for $\Sigma$. We claim that this $W$ is a basis for $-\Sigma$ also. To prove this, first of all we will establish that $W$ is a resolving set for $-\Sigma$. As $W$ is a resolving set for $\Sigma$, given $u,v\in V(\Sigma)$, there exists $i$ such that $1\le i\le k$ with $d_\Sigma(u,w_i)\neq d_\Sigma(v,w_i)$. As our motive is to prove that \begin{equation}\label{eq2}
d_{-\Sigma}(u,w_i)\neq d_{-\Sigma}(v,w_i)	
\end{equation} for the same $i$ we proceed with the following three cases.\\
\textbf{Case(i)}: Both the shortest paths $P(u,w_i)$ and $P(v,w_i)$ are of even length. In this case, using \eqref{eq1}, $\sigma'(P(u,w_i))=\sigma(P(u,w_i))$ and $\sigma'(P(v,w_i))=\sigma(P(v,w_i))$ so that $d_{-\Sigma}(u,w_i)=d_{\Sigma}(u,w_i)$ and $d_{-\Sigma}(v,w_i)=d_{\Sigma}(v,w_i)$ which proves the requisite condition in \eqref{eq2}.\\
\textbf{Case(ii)}: Both the shortest paths $P(u,w_i)$ and $P(v,w_i)$ are of odd length. Here, using \eqref{eq1}, $\sigma'(P(u,w_i))=-\sigma(P(u,w_i))$ and $\sigma'(P(v,w_i))=-\sigma(P(v,w_i))$ so that $d_{-\Sigma}(u,w_i)=-d_{\Sigma}(u,w_i)$ and $d_{-\Sigma}(v,w_i)=-d_{\Sigma}(v,w_i)$ which proves the condition in \eqref{eq2}.\\
\textbf{Case(iii)}: The shortest path $P(u,w_i)$ is of odd length and $P(v,w_i)$ is of even length. Then it is immediate that $d_{G}(u,w_i)\neq d_{G}(v,w_i)$ and as such \eqref{eq2} remains true.
Thus in all cases, the condition in \eqref{eq2} holds good  which proves the claim that $W$ is a resolving set for $-\Sigma$ also. To prove that it is a basis, if possible let $W'$ be a resolving set for $-\Sigma$ with $|W'|<|W|$, then it leads to a contradiction since $W'$ will then be a resolving set for $-(-\Sigma)=\Sigma$.
\end{proof}
Denoting an all-positive signed complete graph by $K_n^+$ and an all-negative signed complete graph by $K_n^-$, we have the following.
\begin{cor}\label{complete1}
	$\dim(K_n^+)=\dim (K_n^-)=n-1$.
\end{cor}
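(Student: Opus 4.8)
The plan is to prove the statement $\dim(K_n^+)=\dim(K_n^-)=n-1$ by handling the two complete signed graphs separately and then combining. For the all-positive graph $K_n^+$, every edge has sign $+1$, so the signed graph is balanced and hence compatible, and the signed distance coincides with the ordinary distance on the underlying $K_n$; that is, $d_\Sigma(u,v)=d_G(u,v)=1$ for all distinct $u,v$. Since it is well known that $\dim(K_n)=n-1$ in the unsigned setting, Theorem~\ref{gandsigma} gives $\dim(K_n^+)\le\dim(K_n)=n-1$, so I would argue the matching lower bound directly: any vertex outside the resolving set $W$ has every coordinate equal to $1$ (its distance to each $w_i$ is $1$), so two distinct vertices not in $W$ would receive identical representations, forcing at least $n-1$ vertices into $W$.

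For the all-negative graph $K_n^-$, the key observation is that $K_n^-$ is exactly the negation $-(K_n^+)$. Since $K_n^+$ is compatible, Theorem~\ref{negate} applies and yields $\dim(K_n^-)=\dim(-(K_n^+))=\dim(K_n^+)$. This is the cleanest route, as it lets me avoid recomputing signed distances in $K_n^-$ from scratch and instead reuse the negation-invariance already established. Combining the two computations then gives $\dim(K_n^+)=\dim(K_n^-)=n-1$, which is the desired equality.

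I expect the only genuine subtlety to be verifying that $K_n^+$ is compatible so that Theorem~\ref{negate} is legitimately applicable; this follows because balanced signed graphs are compatible, as noted in the excerpt, and $K_n^+$ is trivially balanced. An alternative to invoking Theorem~\ref{negate} for the all-negative case would be to verify directly that $K_n^-$ is anti-balanced (hence compatible) and that its signed distances are all $-1$ between distinct vertices, so that the same lower-bound argument as for $K_n^+$ applies verbatim with $-1$ in place of $+1$. Either way the lower bound is the part requiring an actual argument, while the upper bound is immediate from Theorem~\ref{gandsigma}. The main obstacle, if any, is purely bookkeeping: confirming compatibility and confirming that the resolving-set argument survives the sign change, both of which are routine once the negation-invariance theorem is in hand.
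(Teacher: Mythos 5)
Your proposal is correct and follows essentially the same route as the paper: the paper's proof is precisely to observe that $K_n^+$ has the same metric representations as the unsigned $K_n$ (so $\dim(K_n^+)=n-1$) and that $K_n^-=-K_n^+$, then apply Theorem~\ref{negate}. Your additional explicit verification of the lower bound for $K_n^+$ and of its compatibility is sound but just fills in details the paper leaves implicit.
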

\begin{proof}
	Note that $K_n^-=-K_n^+$ and apply Theorem~\ref{negate}.
\end{proof}
\begin{cor}\label{metdimall}
The metric dimension of an all-negative signed cycle is $2$.
\end{cor}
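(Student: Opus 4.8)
The plan is to combine Corollary~\ref{metdimall}'s two natural ingredients: the known metric dimension of the underlying cycle $C_n$, and the negation-invariance established in Theorem~\ref{negate}. Recall that for the unsigned cycle, $\dim(C_n)=2$ for every $n\ge 3$. My first step is to observe that an all-negative signed cycle is precisely $-C_n^+$, where $C_n^+$ denotes the all-positive signed cycle on the same underlying graph. Since $C_n^+$ is balanced, it is compatible (as noted in the excerpt), and hence by Theorem~\ref{negate} its negation $-C_n^+$ shares the same metric dimension, giving $\dim(C_n^-)=\dim(C_n^+)$. So it suffices to pin down $\dim(C_n^+)$.

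Next I would argue that $\dim(C_n^+)=2$. The upper bound $\dim(C_n^+)\le\dim(C_n)=2$ is immediate from Theorem~\ref{gandsigma}. For the lower bound, I would rule out $\dim(C_n^+)=1$ by a direct argument: if $W=\{w\}$ were a resolving set, then for the all-positive cycle every signed distance $d_\Sigma(v,w)$ equals the ordinary distance $d_{C_n}(v,w)$ (all signs are $+1$), so the signed representation reduces to the unsigned one. Since $\dim(C_n)=2>1$, a single vertex cannot resolve $C_n$, and therefore cannot resolve $C_n^+$ either. This forces $\dim(C_n^+)=2$, and combined with the previous paragraph yields $\dim(C_n^-)=2$.

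The one point requiring genuine care is the compatibility hypothesis underlying the whole argument, since Theorem~\ref{negate} applies only to compatible signed graphs and the metric-dimension framework in this paper is defined only for compatible signed graphs. I would verify explicitly that $C_n^-$ is compatible: being all-negative, it is anti-balanced, and anti-balanced signed graphs are listed in the excerpt among the examples of compatible signed graphs. Thus the signed distances $d_\Sigma(u,v)$ are well defined, and both $C_n^+$ (balanced) and $C_n^-$ (anti-balanced) fall within the scope of the theory. I expect this compatibility check to be the main obstacle, not because it is deep but because it is the step where the argument could silently break if $C_n^-$ were not compatible; once it is confirmed, the rest is a short chain of the two cited results.
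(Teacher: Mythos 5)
Your proof is correct and follows essentially the same route as the paper: identify the all-negative cycle as the negation of the all-positive one, use $\dim(C_n)=2$, and apply Theorem~\ref{negate}. The extra details you supply (the explicit lower-bound argument for $\dim(C_n^+)$ and the compatibility check via anti-balance) are sound elaborations of steps the paper leaves implicit.
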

\begin{proof}
If $C_n^\sigma=(C_n,\ \sigma)$ is the all negative signed cycle, then $-C_n^\sigma=C_n$ and the well known result \cite{chartrand} $\dim(C_n)=2$ completes the proof from Theorem~\ref{negate}.
\end{proof}
In the case of an unsigned complete graph $K_n$, it is well known that $\dim(K_n)=n-1$. It is not generally true in the case of signed complete graphs as illustrated in the following example, but we have the bounds for the metric dimension of signed complete graphs given in Theorem~\ref{complete}.
\begin{ex}\rm{Consider $K_3^{\sigma_1}$ where there is only one negative edge. Then using Theorem~\ref{oddcycle}, $\dim(K_3^{\sigma_1})=1$. Also for a complete signed graph with the underlying graph $K_4$ having only one negative edge, its metric dimension can be easily verified to be $2$. Moreover using the possible vectors $(0,\pm 1),\ (\pm 1,0), \ (1,1),\ (-1,1),\ (1,-1),\ (-1,-1)$, one can really construct signed complete graphs of order up to $6$ with the metric dimension $2$.}
\end{ex}
From the above discussion, we arrive at the following theorem in the case of signed complete graphs of order greater than $3$.
\begin{thm}\label{complete}
The metric dimension of a signed complete graph $K_n^\sigma=(K_n,\sigma)$ satisfies $2\le\dim(K_n^\sigma)\le n-1$ for all $n\ge 4$.
\end{thm}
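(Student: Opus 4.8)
The plan is to treat the two bounds separately, handling the upper bound almost entirely by appealing to earlier results and reserving the small amount of genuine work for the lower bound. For the upper bound, I would simply invoke Theorem~\ref{gandsigma}, which gives $\dim(\Sigma)\le\dim(G)$ for any signed graph $\Sigma=(G,\sigma)$. Taking $G=K_n$ and using the well-known fact $\dim(K_n)=n-1$ (also recorded for the all-positive case in Corollary~\ref{complete1}), we obtain $\dim(K_n^\sigma)\le n-1$ at once, with no case analysis required.

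The substance of the argument lies in the lower bound, and the first thing I would record is a structural observation about signed distances in a complete graph. Since any two distinct vertices $u,v$ of $K_n$ are adjacent, the unique shortest path between them is the single edge $uv$, of length $1$; every other $u$--$v$ path has length at least $2$. Consequently $\sigma_{\max}(uv)=\sigma_{\min}(uv)=\sigma(uv)$, so $K_n^\sigma$ is automatically compatible and
\[
d_\Sigma(u,v)=\sigma(uv)\in\{+1,-1\}\qquad\text{for all }u\neq v.
\]
Thus the key feature to exploit is that all signed distances between distinct vertices are confined to the two values $+1$ and $-1$.

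With this in hand, I would rule out any singleton resolving set by a pigeonhole argument. Suppose, for contradiction, that $W=\{w\}$ were a resolving set. For every vertex $v\neq w$ the metric representation is the one-coordinate vector $r(v\mid W)=(d_\Sigma(v,w))$, which by the observation above equals either $(1)$ or $(-1)$. Since $n\ge 4$, there are $n-1\ge 3$ vertices distinct from $w$, but only two admissible representations; hence two distinct vertices $v_i,v_j$ must satisfy $r(v_i\mid W)=r(v_j\mid W)$, contradicting that $W$ resolves $\Sigma$. Therefore no single vertex resolves $K_n^\sigma$, giving $\dim(K_n^\sigma)\ge 2$ and completing the proof.

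I expect no serious obstacle here: the only point requiring care is the reduction of the signed distances to $\{\pm 1\}$ via uniqueness of the shortest path, after which the pigeonhole step is routine. It is worth noting that the hypothesis $n\ge 4$ is exactly what the counting needs, which is consistent with the preceding example showing $\dim(K_3^{\sigma_1})=1$.
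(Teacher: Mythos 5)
Your proposal is correct and follows essentially the same route as the paper: the upper bound via Theorem~\ref{gandsigma} together with $\dim(K_n)=n-1$, and the lower bound from the observation that all signed distances in $K_n^\sigma$ lie in $\{+1,-1\}$, so a single vertex cannot distinguish $n-1\ge 3$ others. The paper leaves this as an informal deduction ``from the above discussion'' of the admissible representation vectors, whereas you have written out the pigeonhole step explicitly; no discrepancy in substance.
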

\begin{thm}
The metric dimension of a signed path is $1$.
\end{thm}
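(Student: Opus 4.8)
The plan is to exhibit a single vertex that resolves the signed path and then invoke the lower bound already established. First I would fix a labeling $v_1, v_2, \ldots, v_n$ of the vertices of the underlying path $P_n$ so that $v_i$ and $v_{i+1}$ are adjacent, and note that $P_n$ is geodetic, i.e.\ the shortest path between any two of its vertices is unique. By the remarks preceding Theorem~\ref{compcycle}, geodetic signed graphs are compatible, so the signed path $P_n^\sigma$ is compatible and the signed distance $d_\Sigma$ together with the metric representations are well-defined.

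The key step is to take $W = \{v_1\}$, an endpoint. In the underlying graph, $d_G(v_1, v_j) = j-1$ for each $j$, and these $n$ values $0, 1, \ldots, n-1$ are pairwise distinct, so $\{v_1\}$ is a resolving set for $P_n$; this is the classical fact $\dim(P_n) = 1$. By Theorem~\ref{undres}, every resolving set of the underlying graph is a resolving set of $\Sigma$, so $\{v_1\}$ resolves $P_n^\sigma$ as well. Concretely, $d_\Sigma(v_1, v_j) = \sigma(P(v_1, v_j))\,(j-1)$ with $|d_\Sigma(v_1, v_j)| = j-1$ all distinct, whence the signed representations $r(v_j \mid W)$ are pairwise distinct. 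This yields the upper bound $\dim(P_n^\sigma) \le 1$.

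Finally I would close the argument with the lower bound $\dim(P_n^\sigma) \ge 1$ supplied by Theorem~\ref{gandsigma}, giving $\dim(P_n^\sigma) = 1$. There is no serious obstacle here: the statement is essentially a corollary of Theorems~\ref{undres} and~\ref{gandsigma} together with the classical value $\dim(P_n) = 1$. The only point deserving a moment's care is confirming compatibility of the signed path so that the signed-distance framework applies, and this is immediate since paths are geodetic. One could even bypass Theorem~\ref{undres} entirely and argue directly, since distinct absolute values of the coordinates already force distinct signed coordinates, but routing through the earlier theorem keeps the argument shortest.
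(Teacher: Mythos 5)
Your proposal is correct and matches the paper's argument: the paper likewise takes the endpoint $W=\{u_1\}$, notes $r(u_i|W)=(\sigma(u_1u_i)(i-1))$ are pairwise distinct, and also records the shorter route $1\le\dim(P_n^\sigma)\le\dim(P_n)=1$ via Theorem~\ref{gandsigma}. Your added remark on compatibility of geodetic graphs is a harmless (and correct) explicit justification of a hypothesis the paper leaves implicit.
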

\begin{proof}
Let $P_n^\sigma=(P_n,\sigma)$ be the given signed path where $P_n:u_1u_2\cdots u_n$. $P_n^\sigma$ being a compatible signed graph, letting $W=\{u_1\}$, it can be easily seen that $r(u_i|W)=(\sigma(u_1u_i)(i-1))$ for $i\neq 1$ and $(0)$ for $i=1$. This shows that $W$ is a resolving set for $P_n^\sigma$ and $\dim(P_n^\sigma)=1$.\\
Indeed, there is a shorter proof, if we use Theorem~\ref{gandsigma} as follows. $1\le\dim(P_n^\sigma)\le\dim(P_n)=1$.
\end{proof}
For an unsigned graph, the converse is also true and an unsigned path is characterized to be the only graph having metric dimension $1$. But in the case of signed graphs, it is not the case as proved in the following theorems. Note that in the case of an unsigned cycle $C_n,\ \dim(C_n)=2$. We denote by $d^+(v)$, the number of positive edges incident with the vertex $v$ in a signed graph and similarly $d^-(v)$ denotes the number of negative edges incident with $v$. The net-degree of a vertex $v$ in a signed graph is $d^\pm(v)=d^+(v)-d^-(v)$. Also, open $k$-neighbourhood of a vertex $v$ in a signed graph $\Sigma=(G,\sigma)$, denoted by  $N_k(v)$, is $N_k(v)=\{u\in V(\Sigma):d(u,v)=\pm\ k\}$.
\begin{thm}\label{oddcycle}
Let $C_n^\sigma=(C_n,\sigma)$ be a signed cycle. Then $\dim(C_n^\sigma)=1$ if and only if there exists a vertex $u$ with $d^\pm(u)=0$ and for $v,w\in N_k(u),\  \sigma(uv)\neq \sigma(uw)$ whenever $v\neq w$  for all $k=1,2,\cdots,\lfloor n/2 \rfloor$.  	
\end{thm}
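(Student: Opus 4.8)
The plan is to reduce the statement to a direct analysis of when a single vertex $u$ resolves $C_n^\sigma$, since $\dim(C_n^\sigma)=1$ is equivalent to the existence of a one-element resolving set $\{u\}$ (the lower bound $\dim\ge 1$ being automatic from Theorem~\ref{gandsigma}). With $W=\{u\}$ the representation of a vertex $v$ is the single signed number $r(v|W)=(d_\Sigma(v,u))=(\sigma(uv)\,d(u,v))$, so that $|d_\Sigma(v,u)|=d(u,v)$. The first observation I would record is therefore that two vertices lying at distinct distances from $u$ are \emph{automatically} distinguished by $\{u\}$, since their representations already differ in absolute value (and $u$ itself, with value $0$, differs from all others). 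Consequently, the only way $\{u\}$ can fail to resolve is through a clash between two vertices at the same distance from $u$.

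Next I would exploit the structure of the cycle: for each $k$ with $1\le k\le\lfloor n/2\rfloor$ the set $N_k(u)$ of vertices at distance $k$ from $u$ contains at most two vertices --- exactly two when $k<n/2$, and exactly one (the antipodal vertex) when $n$ is even and $k=n/2$. For a pair $v\neq w$ in $N_k(u)$ we have $d_\Sigma(v,u)=\sigma(uv)\,k$ and $d_\Sigma(w,u)=\sigma(uw)\,k$ with $k\ge 1$, so these coincide precisely when $\sigma(uv)=\sigma(uw)$. Hence $\{u\}$ resolves $C_n^\sigma$ if and only if, for every $k$, the (at most two) vertices of $N_k(u)$ carry opposite signs $\sigma(u\cdot)$ whenever two of them exist; the lone antipodal vertex in the even case imposes no condition, the clause ``whenever $v\neq w$'' being vacuous there, and in any case it is the unique vertex at the maximal distance $n/2$ and so can never clash with another vertex.

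It then remains to match this resolving criterion with the statement of the theorem. For $k=1$ the set $N_1(u)$ consists of the two neighbours of $u$ and $\sigma(u\cdot)$ is just the incident edge sign, so requiring the two signs to be opposite is exactly $d^+(u)=d^-(u)=1$, i.e. $d^\pm(u)=0$; for $k\ge 2$ it is exactly the condition $\sigma(uv)\neq\sigma(uw)$ for distinct $v,w\in N_k(u)$. Assembling the cases over all $k$ yields the stated equivalence: $\{u\}$ resolves $C_n^\sigma$ if and only if $u$ satisfies $d^\pm(u)=0$ together with the sign condition for all $k=1,2,\dots,\lfloor n/2\rfloor$, whence $\dim(C_n^\sigma)=1$ iff such a $u$ exists. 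Both implications fall out at once from this biconditional.

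The computations involved are light; the step requiring the most care is the bookkeeping for even $n$, where one must note that compatibility (our standing assumption) forces the cycle to be balanced, so that $\sigma(uv)$, and hence $d_\Sigma(\cdot,u)$, is well defined at the antipodal vertex. I would also flag explicitly the mild redundancy between the hypothesis $d^\pm(u)=0$ and the $k=1$ instance of the sign condition, making clear that for a cycle these two statements are literally identical and are listed separately only for emphasis.
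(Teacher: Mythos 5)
Your proposal is correct and follows essentially the same route as the paper's proof: both reduce the claim to the observation that a singleton $\{u\}$ resolves $C_n^\sigma$ precisely when the (at most two) vertices in each $N_k(u)$ receive opposite values of $\sigma(u\cdot)$, with clashes only possible inside a single $N_k(u)$. If anything, your treatment of the even case is tidier than the paper's (which detours through Corollary~\ref{metdimall}), since you handle the antipodal vertex uniformly and correctly note that compatibility forces balance so that $\sigma(uv)$ is well defined there; your remark that $d^\pm(u)=0$ is just the $k=1$ instance of the sign condition is also an accurate observation the paper leaves implicit.
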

\begin{proof} First we deal with the case when $n=2p+1$. The case for even cycle follow along the same lines. Let the underlying odd cycle $C_{2p+1}$ has the vertex set $V$  and let $u_1\in V$ be the vertex with $d^\pm(u_1)=0$. We claim that $W=\{u_1\}$ is a resolving set and hence a basis, if for $v,w\in N_k(u_1),\  \sigma(u_1v)\neq \sigma(u_1w)$ whenever $v\neq w$  for all $k=1,2,\cdots, p$. Clearly, if $v\in N_i(u_1)$ and $w\in N_j(u_1)$ for $i\ne j$, then $r(v|W)\neq r(w|W)$ as $d(v,u_1)\neq d(w,u_1)$. Therefore, we choose $v,w\in N_k(u_1)$ with $v\neq w$ where $k=1,2,\cdots, p$. Then by the assumption that $\sigma(u_1v)\neq \sigma(u_1w)$,    $d(v,u_1)\neq d(w,u_1)$ proving that $r(v|W)\neq r(w|W)$. This establishes one way implication. For the converse, first we note that existence of a vertex $u_1$ with net degree zero is essential and only such vertices can come in a basis of cardinality $1$.  Moreover, if for some $v,w\in N_k(u_1), \ \sigma(u_1v)=\sigma(u_1w)$, then $r(v|W)=r(w|W)$ where $W=\{u_1\}$, implying that, it is not a resolving set and hence not a basis.

In the case of an even signed cycle, using Theorem~\ref{compcycle}, it is compatible only when it is balanced. Hence to check its metric dimension is $1$, in view of Corollary~\ref{metdimall}, it is first of all necessary that there must exist a vertex of net degree $0$ and thereafter the remaining assumptions in the theorem.	
\end{proof}
Now we deal with the metric dimension of signed stars. In the case of all-positive and all-negative signed stars, the dimension, with the help of the established result \cite{chartrand} and Theorem~\ref{negate}, is $n-1$.
\begin{thm}\label{star}
	$\dim(K_{1,n}^\sigma)=n-2$ for $n\geq 3$ when it is non-homogeneous.
\end{thm}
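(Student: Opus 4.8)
The plan is to analyze the signed-distance structure of the star directly, exploiting that a star is geodetic and hence compatible, so every signed distance is well defined. First I would fix notation: let $c$ denote the central vertex and $v_1,\dots,v_n$ the leaves, and partition the leaves by edge sign into $P=\{v_i:\sigma(cv_i)=+1\}$ and $N=\{v_i:\sigma(cv_i)=-1\}$, with $|P|=p$ and $|N|=q$. Non-homogeneity forces $p\ge 1$ and $q\ge 1$. The only nonzero distances are $d_\Sigma(c,v_i)=\sigma(cv_i)$ and, for $i\neq j$, $d_\Sigma(v_i,v_j)=2\,\sigma(cv_i)\sigma(cv_j)$, which equals $+2$ when $v_i,v_j$ lie in the same class and $-2$ otherwise.

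The crux, and the main obstacle, is the lower bound, which rests on a single observation: two leaves $v_i,v_j$ of the same sign can be resolved only by a landmark equal to $v_i$ or $v_j$. Indeed, the coordinate at $c$ gives both the common value $\sigma(cv_i)=\sigma(cv_j)$; the coordinate at any third leaf $v_\ell$ gives both the value $2\,\sigma(cv_i)\sigma(cv_\ell)$; only the coordinate at $v_i$ (or $v_j$) itself produces a distinguishing $0$. Consequently, for every same-class pair at least one member must lie in a resolving set $W$, so at most one vertex of $P$ and at most one vertex of $N$ can be omitted. This yields $|W|\ge (p-1)+(q-1)=n-2$.

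For the matching upper bound I would exhibit an explicit resolving set of size $n-2$: choose one positive leaf $v_a$ and one negative leaf $v_b$ (possible since $p,q\ge1$) and set $W=\{v_1,\dots,v_n\}\setminus\{v_a,v_b\}$, which is nonempty because $n\ge 3$. Each leaf $v_k\in W$ is immediately distinguished from every other vertex, since it alone receives the value $0$ in its own coordinate. It then remains only to separate $c$, $v_a$ and $v_b$ from one another, and for this any single coordinate $w=v_k\in W$ suffices: $c$ receives $\pm1$ while $v_a,v_b$ receive $\pm2$, and $v_a,v_b$ receive values of opposite sign ($2\,\sigma(cv_k)$ versus $-2\,\sigma(cv_k)$) because they lie in different classes.

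Combining the two bounds gives $\dim(K_{1,n}^\sigma)=n-2$. I expect the lower bound to be the conceptually delicate step, as it requires ruling out \emph{every} possible landmark for a same-sign pair, whereas the upper bound reduces to a routine verification once the explicit set is written down. A small point to record carefully is the boundary case $n=3$, where $n-2=1$ and $W$ is a single leaf, to confirm the construction still produces pairwise distinct metric representations.
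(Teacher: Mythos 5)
Your proposal is correct and follows essentially the same route as the paper: the same sign-class partition of the leaves, the same explicit resolving set (all leaves minus one of each sign), and the same key observation for the lower bound, namely that two same-sign leaves not in the landmark set receive identical representations (the paper packages this as a pigeonhole count on the at least four vertices outside a smaller candidate set, while you state it directly as ``each same-sign pair must meet $W$,'' but the underlying idea is identical). Your write-up is in fact slightly more careful, since it makes explicit both the signed-distance values and the $n=3$ boundary case.
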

\begin{proof}
Let $V=\{u\}\cup V_1 \cup V_2$ be the vertex set where $u$ is the central vertex of $K_{1,n}$ and $V_1=\{v_1,v_2,\cdots v_l\}$ and $ V_2=\{u_1,u_2,\cdots u_m\}$ be the vertices of net-degrees $1$ and $-1$ respectively with $l+m=n$. Since  $K_{1,n}^{\sigma}$ is non-homogeneous, $l\ge 1$ and $m\ge 1$. Define an ordered set $W = \{v_1,v_2,...v_{l-1},u_1,u_2,...u_{m-1}\}$. We claim that this is the required basis for  $K_{1,n}^\sigma$. First we prove that $W$ is a resolving set, for which it is enough to establish that metric representation of $u,v_l$ and $u_m$ are different. Denoting the $i$-th co-ordinate of $r(u|W)$ by $r(u|W)(i)$,
	\begin{align*} r(u|W)(i)&=1 \mbox{\ for\ } 1\le i\le l-1\\
		&=-1 \mbox{\ for\ } l\le i\le n-2
	\end{align*} and
	\begin{align*} r(v_l|W)(i)&=2 \mbox{\ for\ } 1\le i\le l-1\\
		&=-2 \mbox{\ for\ } l\le i\le n-2
	\end{align*}
	Similarly,
	\begin{align*} r(u_m|W)(i)&=-2 \mbox{\ for\ } 1\le i\le l-1\\
		&=2 \mbox{\ for\ } l\le i\le n-2
	\end{align*}	
Thus all the vertices have different metric representations with respect to $W$ proving that it is a resolving set. To show that $W$ is minimum, if possible let $W'$ be a resolving set with $|W'|<|W|$. Assume $|W'|=k<n-2$ and let $W'=\{x_1,x_2,\cdots x_h\}\cup \{x_{h+1},x_{h+2},\cdots, x_k\}=W_1\cup W_2$, say, such that vertices in $W_1$ are of net-degree $1$ and $W_2$ contains vertices of net-degree $-1$ and possibly, they may include the central vertex of the signed star too. Then, outside $W_1 \cup W_2$ there exist at least four vertices from among them we can pick at least two vertices $x_i$ and $x_j$ such that $\sigma(ux_i) \sigma(ux_j) = 1$, which implies that $r(x_i|W')= r(x_j|W')$, a contradiction to the fact that $W'$ is a resolving set. Hence the result.	
\end{proof}
\section{Metric dimension of signed wheels}\label{s2}
This section deals exclusively with the metric dimension of signed wheels and allied results on resolving sets for a signed wheel. A wheel $W_{n}$ is the join of a cycle $C_n$ and $K_1$. The vertex of $K_1$ is called the central vertex of the wheel $W_{n}$ and the vertices on the cycle is often referred to as its rim vertices. The following theorem, found in \cite{shan}, gives the metric dimension of an unsigned wheel.
\begin{thm}[\cite{shan}]\label{wheel}
	\begin{equation}\label{equdag}
	\dim(W_{n})= \left\{
	\begin{array}{rlll}  \Bigl\lfloor \dfrac{2n+2}{5}\Bigr\rfloor,& \text{if } n\ge 7,\\
	2, & \text{if } n=4,5,\\
	3, & \text{if } n=3,6.
	\end{array} \right.
	\end{equation}	
\end{thm}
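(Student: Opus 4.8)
The plan is to exploit the very small diameter of $W_n$ and reduce the problem to a combinatorial condition on how the chosen landmarks are distributed around the rim. First I would record the distance structure: for $n\ge 4$ the hub $c$ is at distance $1$ from every rim vertex, while two rim vertices are at distance $1$ if they are consecutive on the cycle and at distance $2$ otherwise, so $W_n$ has diameter $2$ and every coordinate of a metric representation lies in $\{0,1,2\}$. I would next argue that an optimal basis may be taken inside the rim: the coordinate coming from $c$ equals $1$ for every rim vertex, so $c$ never separates two rim vertices and may be discarded from (or swapped out of) any resolving set. It then remains to resolve the rim vertices using a set $W$ of rim vertices.

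The key step is to translate ``resolving'' into the language of gaps. Because distances take only the values $0,1,2$, the representation of a non-landmark rim vertex $v$ records exactly which landmarks are its cycle-neighbours; since $v$ has only two neighbours on the rim, this representation has at most two coordinates equal to $1$ and all others equal to $2$. Writing the landmarks in cyclic order and letting $g_1,\dots,g_k$ be the numbers of non-landmark vertices in the arcs between consecutive landmarks (so $\sum g_i=n-k$), a short case analysis on the endpoints and interior of each arc shows that, for $n\ge 7$ where automatically $k\ge 3$ and so no rim vertex can coincide with the all-ones representation of $c$, the set $W$ resolves $W_n$ precisely when: (C1) at most one arc has size $\ge 3$, and such an arc has size exactly $3$, for otherwise two interior vertices share the all-$2$ representation; and (C2) no two cyclically consecutive arcs both have size $\ge 2$, for otherwise the two vertices flanking the landmark between them share the representation with a single $1$ in that landmark's coordinate.

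I would then optimise. The ``big'' arcs (size $\ge 2$) form an independent set in the cyclic arrangement of the $k$ arcs, so there are at most $\lfloor k/2\rfloor$ of them; allowing, via (C1), exactly one of them to have size $3$ and the rest size $2$, and taking every remaining arc to have size $1$, gives the sharp bound $n=k+\sum g_i\le 2k+\lfloor k/2\rfloor+1=\lfloor 5k/2\rfloor+1$. Since $\lceil (2n-2)/5\rceil=\lfloor (2n+2)/5\rfloor$, inverting this yields $\dim(W_n)=k\ge\lfloor (2n+2)/5\rfloor$ for $n\ge 7$, and the extremal pattern just described (one arc of size $3$, with arcs of sizes $2$ and $1$ alternating, gaps shrunk as needed to meet the exact value of $n$) provides an explicit resolving set of that size, giving equality.

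Finally, the small cases $n=3,4,5,6$ would be settled by direct inspection, and this is where the only genuinely delicate point lies. For these values the counting bound is not tight, because the reduction to rim landmarks and the hypothesis $k\ge 3$ both fail: when $k=2$ a rim vertex adjacent to both landmarks has representation $(1,1)$, which coincides with that of the hub, so size-$1$ arcs are no longer free and the construction collapses. This is exactly the obstruction forcing $\dim(W_6)=3$ rather than $2$, while $\dim(W_3)=\dim(K_4)=3$ and $\dim(W_4)=\dim(W_5)=2$ are verified by exhibiting explicit bases. I expect confirming these boundary obstructions, and checking that the floor-function inversion lands exactly on $\lfloor (2n+2)/5\rfloor$, to be the most error-prone parts, whereas the gap analysis underlying (C1) and (C2) is routine.
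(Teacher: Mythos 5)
The paper does not actually prove this statement: Theorem~\ref{wheel} is quoted from the cited source \cite{shan} and used as a black box, so there is no in-paper proof to compare your attempt against. Judged on its own, your outline is the standard landmark--gap argument for wheels and is essentially sound: the reduction to rim landmarks, the characterisation of resolving sets of rim vertices (for $k\ge 3$) by your conditions (C1) and (C2), the extremal count $n\le\lfloor 5k/2\rfloor+1$, and the arithmetic identity $\lceil (2n-2)/5\rceil=\lfloor (2n+2)/5\rfloor$ all check out, as do the sporadic values for $n\le 6$, where, as you correctly note, the hub's all-ones representation is exactly what kills the two-landmark constructions for $n=6$ (and forces the adjacent-landmark pattern $(3,0)$ rather than $(2,1)$ for $n=5$). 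The two places that must be written out with care are the ones you already flag: the swap argument showing that a minimum resolving set containing the hub can be converted to one of the same size inside the rim (discarding alone would weaken the lower bound by one), and the verification that for every $n\ge 7$ the extremal gap pattern can be shrunk to a sequence summing to $n-k$ while preserving (C1) and (C2) --- which holds because both conditions are monotone under shrinking gaps.
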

We use the notation $W_{n}^\sigma=(W_{n},\sigma)$ to denote a signed wheel. For any two vertices $u$ and $v$ in $ W_n $, either $d(u,v)=1$ or $d(u,v)=2 $ as the diameter of any wheel is at most $2$. So to discuss the distance compatibility of $W_{n}^{\sigma} $, where $n\ge 4$, in view of Theorem~\ref{compcycle}, it is enough to consider only $ C_4 $ with one vertex as the central vertex of $W_{n}^{\sigma}$. We establish the criteria for the distance compatibility of signed wheels as follows in which $C_4^-$ denotes a negative cycle with four vertices.
  \begin{thm}
A signed wheel $W_{n}^{\sigma}$, where $n\ge 4$, is distance compatible if and only if it is $C_{4}^-$-free where one of the vertices of the cycle is the central vertex of  $W_{n}^{\sigma}$.
\end{thm}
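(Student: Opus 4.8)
The plan is to prove both directions at once through the compatibility criterion of Theorem~\ref{compcycle}. First I would note that for $n\ge 4$ the wheel $W_n$ is $2$-connected and non-geodetic (the rim pair $v_i,v_{i+2}$ already admits the two distinct geodesics $v_iv_{i+1}v_{i+2}$ and $v_iv_0v_{i+2}$, where $v_0$ is the central vertex), so Theorem~\ref{compcycle} applies without modification. Since the diameter of a wheel is $2$, any ``bad'' even cycle $C_{2k}$ — one carrying two diametrically opposite vertices whose graph-distance equals $k$ — must satisfy $k\le 2$; as $C_2$ does not occur in a simple graph, only $C_4$ (the case $k=2$) can be bad. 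Hence $W_n^{\sigma}$ is compatible if and only if it contains no negatively signed $C_4$ possessing a diametrically opposite pair at distance $2$.

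Next I would classify the relevant $4$-cycles. A $4$-cycle through $v_0$ uses two spokes and two rim edges, and the two rim edges must share a rim vertex, forcing the shape $v_0v_iv_{i+1}v_{i+2}v_0$. Its two diametrically opposite pairs are $(v_0,v_{i+1})$, at distance $1$, and $(v_i,v_{i+2})$, which for $n\ge 4$ is a non-adjacent rim pair and hence at distance $2$. Thus \emph{every} centre-containing $C_4$ automatically carries a distance-$2$ diametric pair, so such a cycle is bad precisely when it is negative. This already settles one direction: a $C_4^-$ through the centre forces incompatibility by Theorem~\ref{compcycle}.

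For the converse I must verify that no $4$-cycle \emph{avoiding} the centre can be bad. When $n\ge 5$ the rim is an induced $C_n$ with no chord, so it contains no $4$-cycle and every $C_4$ of $W_n$ necessarily passes through $v_0$; the centre-cycles are then the only candidates, and their positivity yields compatibility. The one delicate point — and the main obstacle — is $n=4$, where the rim is itself a $C_4$ not through the centre. Here I would show this rim cycle need not be tested separately: the distance-$2$ pair $(v_1,v_3)$ has exactly the three geodesics $v_1v_2v_3$, $v_1v_4v_3$, $v_1v_0v_3$, and the three $4$-cycles obtained by pairing these geodesics are the rim cycle and two centre-cycles, whose signs are the three pairwise products of the geodesic signs and therefore multiply to $+1$. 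Consequently positivity of the two centre-cycles forces positivity of the rim, so compatibility of $(v_1,v_3)$ is equivalent to those two centre-cycles being positive; the analogous statement holds for $(v_2,v_4)$. This redundancy of the rim is exactly what legitimises the preceding remark that one may restrict attention to $C_4$'s through the centre.

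Combining these pieces, for every $n\ge 4$ the signed wheel $W_n^{\sigma}$ is compatible if and only if each $4$-cycle through the centre is positive, i.e.\ if and only if $W_n^{\sigma}$ is $C_4^-$-free with the central vertex as one of the cycle's vertices, which is the asserted characterisation.
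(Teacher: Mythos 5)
Your proposal is correct, and it reaches the characterisation by a somewhat different route than the paper. The paper's proof works directly from the definitions of $\sigma_{\max}$ and $\sigma_{\min}$: a negative $C_4$ through the centre has one or three negative edges, which forces one positive and one negative geodesic between its two rim vertices, and conversely an incompatible pair at distance $2$ yields a positive and a negative geodesic whose union is claimed to be a $C_4^-$ through the centre. You instead reduce the whole theorem to a census of potentially ``bad'' cycles via Theorem~\ref{compcycle}, observing that the diameter bound kills every $C_{2k}$ with $k\ge 3$ and that every centre-containing $C_4$ automatically carries a diametric pair at distance $2$. The most valuable difference is your explicit handling of $n=4$: the paper's converse tacitly assumes the two oppositely signed geodesics must pass through the centre, but in $W_4$ the pair $(v_1,v_3)$ has three geodesics (via $v_2$, $v_4$, and $v_0$), and the offending pair could be the two that avoid $v_0$, whose union is the rim cycle. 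Your observation that the three pairwise products of the geodesic signs multiply to $+1$, so that a negative rim forces a negative centre cycle, closes exactly this gap and legitimises restricting attention to $C_4$'s through the centre. The cost of your approach is its reliance on the external characterisation Theorem~\ref{compcycle}, where the paper's argument is self-contained; the benefit is a cleaner logical structure and a rigorous treatment of the one genuinely delicate case.
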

\begin{proof}
Consider a distance compatible signed wheel $W_{n}^{\sigma}$ with vertex set $\{v, v_{1},v_{2},....v_{n} \}$, where $v$ is the central vertex of  $W_{n}^{\sigma}  $. First of all, assume that  $W_{n}^{\sigma}$ has a negative even cycle $C_4^-$ with $C_4 : vv_1v_2v_3v$. Then  $C_4$ has either one negative edge or three negative edges. In both the cases $\sigma_{max} (v_1 v_3)= 1 $ and $\sigma_{min} (v_1 v_3)= -1 $ and hence $d^{max} (v_1 ,v_3)\neq d^{min}(v_1, v_3) $, which shows $W_n^\sigma$ is not distance compatible, a contradiction.
\\For the converse, assume that $W_n^\sigma$ has no negative even cycle $C_4$ with one of the vertices on it as the central vertex of the wheel. Let $u$ and $v$ be any two vertices of $W_n$. Then the following two cases arise.
	\\  $\boldsymbol {Case$\space$  (i)}$: $u$ and $v$ are adjacent vertices.
		\\If $\sigma(uv)=1$, then $\sigma_{max} (uv) = 1= \sigma_{min} (uv)$ which makes $d^{max}(u,v)=d^{min}(u,v)=1$.
		\\If $\sigma(uv)=-1$, then $\sigma_{max} (uv) = -1= \sigma_{min} (uv)$ implying that $d^{max}(u,v)=d^{min}(u,v)=-1$.
	So $u$ and $v$ are distance compatible.
	\\ $\boldsymbol{Case $\space $ (ii)}$: $u$ and $v$ are nonadjacent vertices so that $d(u,v)=2$. Assume that $\sigma_{max}(uv) \neq \sigma_{min}(uv)$. Since the case $\sigma_{max}(uv)=-1$  and $\sigma_{min}(uv)= 1 $ is not possible simultaneously, we have $ \sigma_{max}(uv)=1 $ and $\sigma_{min}(uv)=-1$. So by definition, there exist a positive and  negative $u-v$ paths of length $2$. These two paths together makes a negative even cycle $C_4^-$ with one vertex as the central vertex of the wheel, a contradiction. Hence $\sigma_{max}(uv)= \sigma_{min}(uv)$ which implies $d^{max}(u,v)=d^{min}(u,v)$ and so $u $ and $v$ are distance compatible.	
\end{proof}
\begin{lem}\label{wheel1}A set of cardinality one cannot resolve a signed wheel $W_{n}^\sigma \ \ \forall \ n \geq 3$. In other words, $\dim(W_n^\sigma)\ge 2$.
\end{lem}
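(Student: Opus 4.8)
The goal is to show that no single vertex can resolve a signed wheel $W_n^\sigma$ for $n \ge 3$; equivalently, for every choice of $W = \{w\}$, there exist two distinct vertices $x \ne y$ with $r(x|W) = r(y|W)$. The plan is to argue that a single resolving vertex forces too much distance/sign information to be captured by one coordinate, and then exhibit a collision. The key structural fact to exploit is that the diameter of a wheel is at most $2$, so for any base vertex $w$, the signed distance $d_\Sigma(x,w)$ takes values only in the set $\{0, \pm 1, \pm 2\}$. With a single-vertex base, the metric representation of each vertex is just a single entry from this five-element set, one of which ($0$) is used solely by $w$ itself. Thus all remaining $n$ vertices must receive \emph{distinct} values from the four-element set $\{\pm 1, \pm 2\}$.

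\textbf{Main steps.}
First I would split into two cases according to whether the chosen base vertex $w$ is the central vertex or a rim vertex. If $w$ is the central vertex, every other vertex is a rim vertex at distance exactly $1$ from $w$, so each representation is a single coordinate of the form $\sigma(w v_i)\cdot 1 \in \{+1,-1\}$; with $n \ge 3 \ge 3$ rim vertices and only two possible values, the pigeonhole principle immediately produces two rim vertices with identical representations. Second, if $w$ is a rim vertex, then the remaining $n$ vertices consist of the central vertex (at distance $1$), the two rim-neighbours of $w$ (at distance $1$), and the other $n-3$ rim vertices (at distance $2$). The four available signed values $\{\pm 1, \pm 2\}$ can accommodate at most two vertices at distance $1$ (values $\pm 1$) and at most two at distance $2$ (values $\pm 2$). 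I would then count: the vertices at distance $1$ from $w$ are the central vertex together with both rim-neighbours, giving three vertices competing for only two sign-values $\{+1,-1\}$, forcing a repetition again by pigeonhole.

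\textbf{The main obstacle.}
The delicate point is the rim-vertex case when $n$ is small (the boundary cases $n = 3$), where one must be careful that the "three vertices at distance $1$" count is genuinely attained and that the compatibility hypothesis does not collapse any distances unexpectedly; here the clean observation that a rim vertex in $W_n$ always has the centre plus two cycle-neighbours all at distance $1$ does the work, since $3 > 2$ regardless of $n \ge 3$. I expect the harder conceptual piece to be phrasing the argument uniformly so that both cases reduce to the same pigeonhole principle on the value set $\{+1,-1\}$ restricted to the distance-$1$ shell, rather than reasoning separately about magnitudes and signs. Once it is observed that in \emph{either} case at least three vertices lie at graph-distance $1$ from the base vertex, while only the two signs $+1$ and $-1$ are available to distinguish them, the collision $r(x|W) = r(y|W)$ is forced and the proof concludes that $\dim(W_n^\sigma) \ge 2$.
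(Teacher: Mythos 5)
Your proposal is correct and follows essentially the same route as the paper's proof: in both cases (central base vertex or rim base vertex) one identifies at least three vertices at graph-distance $1$ from the base vertex, whose signed representations can only take the two values $\pm 1$, so the pigeonhole principle on edge signs forces a collision. The paper phrases this as two explicit sign equalities ($\sigma(v_0v_1)=\sigma(v_0v_2)$ or $\sigma(v_0v_1)=\sigma(v_0v_3)$, and analogously for a rim base vertex), which is exactly your argument.
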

\begin{proof}$\boldsymbol{Case $\space $ (1)}$: Let $B=\{v_0\}$, where $v_0$ is the central vertex. Then there exists three rim vertices $v_1,v_2,v_3$ with $r(v_i|B)=1$. Since \ $\sigma(v_0v_1)=\sigma(v_0v_2)$ \ or \ $\sigma(v_0v_1)=\sigma(v_0v_3)$, this implies $\ r_\Sigma(v_1|B)=r_\Sigma(v_2|B)$ \ \ or \ \ $r_\Sigma(v_1|B)=r_\Sigma(v_3|B)$. Hence, $B = \{v_0\}$ \ does not resolve $W_{n}^\sigma$.\\
$\boldsymbol{Case $\space $ (2)}$:  Assume that $B\neq \{v_0\}$ and let $B=\{v_i\},  v_i\in V \backslash \{v_0\}$. Then \ $\sigma(v_{i-1}v_i)=\sigma(v_{i+1}v_i)$   or  $\sigma(v_{i-1}v_i)=\sigma(v_0v_i)$. Therefore, $\ r_{\Sigma}(v_{i-1}|B)=r_{\Sigma}(v_{i+1}|B)$ \ \ or \ \ $r_{\Sigma}(v_{i-1}|B)=r_{\Sigma}(v_0|B)$ showing that $B$ cannot resolve $W_{n}^\sigma$. Hence the proof.
\end{proof}
\begin{rmk}\rm{For a wheel $W_n, n>6$, let $\dim (W_n)=k$. To show that the central vertex of this unsigned wheel should not be a member of any of its basis, one can have the following argument. If we choose any set $B$ with $k-1$ rim vertices then there are two rim vertices $v_i,v_j$ such that $r(v_i|B)=r(v_j|B)=(x_1,x_2,\cdots,x_{k-1})$, say. So if $B_1=\{v_0\}\cup\ B$, where $v_0$ is the central vertex, then $r(v_i|B_1)=r(v_j|B_1)=(1,x_1,x_2,\cdots,x_{k-1})$. So $B_1$ does not resolve $W_n$ when $n>6$. This completes the argument for the above result. The same is the case with the signed wheel also as shown in the following theorem.}
\end{rmk}
\begin{thm} \label{v0}
For $n=3,4,5,6$, the central vertex may be chosen as an element of a basis of $W_{n}^\sigma$ and for $n>6$, the central vertex should not be an element of a basis of $W_{n}^\sigma$.
\end{thm}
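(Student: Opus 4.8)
The two assertions call for different treatments, so I would prove them separately. For the small wheels $n\in\{3,4,5,6\}$ the claim is an existence statement, and I would settle it by explicit construction: in each case I exhibit one minimum resolving set that happens to contain the central vertex $v_0$. Here the underlying values $\dim(W_n)=3,2,2,3$ from Theorem~\ref{wheel}, together with $\dim(W_n^\sigma)\ge 2$ from Lemma~\ref{wheel1} and $\dim(W_n^\sigma)\le\dim(W_n)$ from Theorem~\ref{gandsigma}, pin the target cardinality closely enough that the few metric representations can be written out and checked by hand. The guiding point is that, for these short rims, the sign $\sigma(v_0v_i)\in\{+1,-1\}$ that $v_0$ contributes actually separates the handful of rim vertices that would otherwise coincide, so no rim vertex needs to replace $v_0$.

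For $n>6$ I would argue by contradiction. Suppose $B_1$ is a basis of $W_n^\sigma$ with $v_0\in B_1$, and put $B=B_1\setminus\{v_0\}$, a set of $k-1$ rim vertices with $k=\dim(W_n^\sigma)$. The heart of the matter is to compute the signed representation of an \emph{interior} rim vertex $x$, meaning a rim vertex $x\notin B$ whose two cycle-neighbours both lie outside $B$. Such an $x$ is at distance $2$ from every $b\in B$, and since $W_n^\sigma$ is compatible (it is $C_4^-$-free through the centre), the sign of $d_\Sigma(x,b)$ is the sign of the length-$2$ path $x\,v_0\,b$; hence $d_\Sigma(x,b)=2\,\sigma(v_0x)\,\sigma(v_0b)$. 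Therefore $r_\Sigma(x\mid B)=\sigma(v_0x)\,\bigl(2\sigma(v_0b)\bigr)_{b\in B}$, a fixed nonzero vector multiplied by the single sign $\sigma(v_0x)$, and the extra coordinate $d_\Sigma(x,v_0)=\sigma(v_0x)$ appearing in $r_\Sigma(x\mid B_1)$ carries exactly the same one bit of information. Consequently $B_1$ distinguishes two interior vertices $x,x'$ only when $\sigma(v_0x)\ne\sigma(v_0x')$, so $B_1$ can tell apart at most two interior vertices in all.

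It then suffices to produce three distinct interior vertices: by pigeonhole on $\sigma(v_0\cdot)\in\{+1,-1\}$ two of them share the same sign and hence the same $B_1$-representation, contradicting that $B_1$ resolves $W_n^\sigma$. Establishing the existence of three such vertices is the step I expect to be the main obstacle. The naive gap estimate $\sum_i\max(g_i-2,0)\ge n-3(k-1)$, with $g_i$ the gap sizes, is not enough once $k$ is comparable to $\dim(W_n)=\lfloor(2n+2)/5\rfloor$, so I would instead import the structure of (near-)minimum resolving sets of the underlying wheel from Theorem~\ref{wheel} and \cite{shan}: for $n>6$ any set of $k-1\le\dim(W_n)-1$ rim vertices leaves behind enough mutually confusable ``all-$2$'' rim vertices to guarantee three of them, precisely upgrading the Remark's unsigned count of two to the three needed to defeat the single $\pm1$ coordinate supplied by the centre. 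The single-adjacency vertices $b\pm1$ (pattern $\{b\}$) would be examined in the same way, comparing their rim-edge signs $\sigma(b(b\pm1))$ and their signs to the centre, but the interior vertices already furnish the contradiction. Thus no basis of $W_n^\sigma$ contains $v_0$ when $n>6$, while for $n\le 6$ one does, completing the proof.
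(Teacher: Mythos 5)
Your treatment of $n\le 6$ matches the paper's in spirit (explicit small-case constructions, with Lemma~\ref{wheel1} and Theorem~\ref{gandsigma} pinning the cardinality), and your identity $d_\Sigma(x,b)=2\,\sigma(v_0x)\sigma(v_0b)$ for a rim vertex $x$ at distance $2$ from $b\in B_1$ is correct and is indeed the key observation. The gap is exactly the step you flag: for $n>6$ your pigeonhole needs at least \emph{three} rim vertices lying at distance $2$ from every element of $B_1=B\setminus\{v_0\}$, and such vertices need not exist. A set of $k-1$ rim vertices covers up to $3(k-1)$ rim positions (itself and its two rim neighbours), and since $k-1$ can be as large as $\lfloor(2n+2)/5\rfloor-1$, a well-spread $B_1$ can leave as few as $n-3(k-1)$ interior vertices; already for $n=13$ with $k-1=4$ vertices placed every third position only one interior vertex survives. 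So the pigeonhole has nothing to act on. The fallback of treating the vertices adjacent to some $b\in B_1$ ``in the same way'' also fails: for such an $x$ the coordinate at $b$ is $\sigma(xb)\cdot 1$, governed by the rim-edge sign, which is independent of $\sigma(v_0x)$; these vertices therefore admit up to four distinct representations per adjacency pattern rather than two, and no contradiction follows from the centre's single $\pm1$ bit. Deferring the missing count to ``the structure of near-minimum resolving sets from \cite{shan}'' is a placeholder, not an argument, and the count it would have to deliver is false.

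The paper sidesteps this entirely by running minimality in the other direction: it shows that whenever a set $B\ni v_0$ resolves $W_n^\sigma$ (which forces $\sigma(v_0v_p)\ne\sigma(v_0v_q)$ for every pair left unresolved by $B\setminus\{v_0\}$ in the underlying wheel), the set $B\setminus\{v_0\}$ already resolves $W_n^\sigma$, so $B$ is never a \emph{minimum} resolving set. No counting of interior vertices is needed. If you want to salvage your draft, replace the pigeonhole by exactly this redundancy claim: your formula $r_\Sigma(x\mid B_1)=\sigma(v_0x)\,\bigl(2\sigma(v_0b)\bigr)_{b\in B_1}$ already shows that the $v_0$-coordinate $\sigma(v_0x)$ is recoverable from $r_\Sigma(x\mid B_1)$ for interior vertices; you would then need to verify the same recoverability for rim vertices adjacent to part of $B_1$ (true unless $x$ is adjacent to all of $B_1$, a degenerate case requiring a separate word when $|B_1|$ is small), which is where the real remaining work lies.
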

\begin{proof}
For $n=3$, the underlying wheel (note that it is a complete graph $K_4$) has a basis $B$ which includes its central vertex. So this basis $B$ resolves $W_{n}^\sigma$ also and if we choose a set $B_{1}$ with two elements, which includes the central vertex $v_{0}$ and a rim vertex, without loss of generality, let it be $v_{1}$, then there exist two rim vertices with $r(v_{2}|B_{1})\ = r(v_{3}|B_{1})\ = (1,1)$. So if $\sigma(v_{0}v_{2}) \neq \sigma(v_{0}v_{3})$, then $r_{\Sigma}(v_{2}|B_{1})\ \neq r_{\Sigma}(v_{3}|B_{1})$. By Lemma \ref{wheel1}, as no singleton set resolves a signed wheel, $B_{1}$ will act as a basis of $W_{3}^\sigma$. On the other hand, if $\sigma(v_{0}v_{2}) = \sigma(v_{0}v_{3})$, then $B$ will becomes a basis of $W_{3}^\sigma$.\\
For $n=4$, $B_{1}=\{v_{0},v_{1}\}$, where  $v_0$ is the central vertex and $v_{1}$ is a rim vertex, does not resolve $W_{n}$, since we have $r(v_{2}|B_{1})\ = r(v_{4}|B_{1})\ = (1,1)$. So the central vertex cannot be an element of any basis of $W_{4}$. But if 
\begin{equation}\label{t1}
	\sigma(v_{0}v_{2}) \neq \sigma(v_{0}v_{4})
\end{equation}
, then  $r_{\Sigma}(v_{2}|B_{1})\ \neq r_{\Sigma}(v_{4}|B_{1})$. By Lemma \ref{wheel1}, this gives $B_{1}$ as basis of $W_{4}^\sigma$. But when, $\sigma(v_{0}v_{2}) = \sigma(v_{0}v_{4})$, $B_{1}$ does not resolve $W_{4}^\sigma$. So the central vertex cannot be an element of basis of $W_{4}^\sigma$. Hence if Equation~\eqref{t1} holds, by Lemma \ref{wheel1}, $W_4^{\sigma}$ can have a basis with central vertex.	\\
For $n=5$, $B_{1}=\{v_{0},v_{1}\}$, where  $v_0$ is the central vertex  and $v_{1}$ is a rim vertex does not resolve $W_{5}$, since we have $r(v_{2}|B_{1})\ = r(v_{5}|B_{1})\ = (1,1)$ and $r(v_{3}|B_{1})\ = r(v_{4}|B_{1})\ = (1,2)$. So the central vertex cannot be an element of basis of $W_{5}$. Now, if
\begin{equation}\label{t2}
	\sigma(v_{0}v_{2}) \neq \sigma(v_{0}v_{5})\ and\ \sigma(v_{0}v_{3}) \neq \sigma(v_{0}v_{4})
\end{equation}, then $r_{\Sigma}(v_{i}|B_{1})\ \neq r_{\Sigma}(v_{j}|B_{1})$, $\forall \ v_{i},   v_{j} \in V$. Therefore, by Lemma \ref{wheel1}, $B_{1}$ acts as a basis of $W_{5}^\sigma$. Moreover, if  $\sigma(v_{0}v_{2}) = \sigma(v_{0}v_{5})$ or $\sigma(v_{0}v_{3}) = \sigma(v_{0}v_{4})$, then $B_{1}$ cannot resolve $W_{5}^\sigma$. Hence if Equation~\eqref{t2} holds, by Lemma \ref{wheel1}, $W_5^{\sigma}$ can have a basis with the central vertex.\\
For $n=6$, the underlying graph has a basis $W$ which includes the central vertex, which resolves $W_{6}^\sigma$ also and if we choose a set $B_{1}$ with two vertices, which includes $v_{0}$ and an arbitrary rim vertex $v_{1}$, say, then there will be three vertices with same metric representation in the underlying graph and let it be $v_{a},v_{b},v_{c}$. As such,
		\begin{equation} \label{w}
			 \sigma(v_{0}v_{a})= \sigma(v_{0}v_{b})\  or\  \sigma (v_{0}v_{a})= \sigma(v_{0}v_{c}).
		\end{equation}
Therefore, $r_{\Sigma}(v_{a}|B_{1})\ = r_{\Sigma}(v_{b}|B_{1})$ or $r_{\Sigma}(v_{a}|B_{1})\ = r_{\Sigma}(v_{b}|B_{1})$. This shows that $ B_{1}$ does not resolve $W_{n}^{\sigma}$. Now, a set with 3 rim vertices resolves $W_{6}^{\sigma}$ and let $ B_{1}=\{v_{k1},v_{k2},v_{k3}\}\subseteq V\setminus\{v_0\}$  be a set with $ r(v_{i}|B_{1})\neq  \ r(v_{j}|B_{1}) ;  \forall  \ v_{i}, v_{j} \in V $. By Theorem \ref{undres}, this gives $\ r_\Sigma(v_{i}|B_{1})\neq  \ r_\Sigma(v_{j}|B_{1}) \ \forall  \ v_{i}, v_{j} \in V $. We proceed with the following cases and subcases.	\\
\underline{Case - A} Consider any set with two vertices without the central vertex, let it be $ B_{2}=\{v_{1},v_{4}\}\subseteq V $ or $B_{2}=\{v_{1},v_{2}\}\subseteq V $ or $B_{2}=\{v_{1},v_{6}\}\subseteq V$. Then, $ r(v_{p}|B_{2})= \ r(v_{q}|B_{2})$ for $v_{p},v_{q} \in V\setminus\{v_{0}\}$. Let $S=\{(v_{p}, v_{q})|\sigma(v_{0}v_{p})=\sigma(v_{0}v_{q})\}$. \\
\underline{Subcase-1:} If $ (v_{p},v_{q})\in S$ then $\ r_\Sigma(v_{p}|B_{2})= r_\Sigma(v_{q}|B_{2}) $. Therefore, $B_{2}$ does not resolve $W_{6}^{\sigma}$\\
\underline{Subcase-2:} If all such $ (v_{p},v_{q})\notin S$ then $\ r_\Sigma(v_{p}|B_{2})\neq r_\Sigma(v_{q}|B_{2}) $. So, $B_{2} $ resolves $W_{6}^{\sigma}$.\\
\underline{Case - B} If $B_{2}=\{v_{1},v_{3}\}$ or $\{v_{1},v_{5}\}$ then $ r(v_{0}|B_{2})= \ r(v_{p}|B_{2})=(1,1)$ for a $v_{p}\in V\setminus\{v_{0}\}$
		\\ 1) If $\ r_\Sigma(v_{0}|B_{2})=(1,1)$ and $ r_\Sigma(v_{p}|B_{2})\in\{(1,-1), (-1,1), (-1,-1)\}; \ v_{p}\in V\setminus B_{2}$ where $p\neq0$, $B_{2}$ resolves $W_{6}^{\sigma}$.
		\\
		2) If $\ r_\Sigma(v_{0}|B_{2})=(-1,-1)$ and $ r_\Sigma(v_{p}|B_{2})\in\{(1,-1), (-1,1), (1,1)\}; \ v_{p}\in V\setminus B_{2}$ where $p\neq0$, $B_{2}$ resolves $W_{6}^{\sigma}$.
		\\
		3) If $\ r_\Sigma(v_{0}|B_{2})=(1,-1)$ and $ r_\Sigma(v_{p}|B_{2})\in\{(-1,1), (1,1), (-1,-1)\};v_{p}\in V\setminus B_{2}$ where $p\neq0$, $B_{2}$ resolves $W_{6}^{\sigma}$.
		\\
		4)  If $\ r_\Sigma(v_{0}|B_{2})=(-1,1)$ and $ r_\Sigma(v_{p}|B_{2})\in\{(1,1), (1,-1), (-1,-1)\}\ ; v_{p}\in V\setminus B_{2}$ where $p\neq0$, $B_{2}$ resolves $W_{6}^{\sigma}$. Otherwise $B_{2} $ does not resolve $W_{6}^{\sigma}$.
		\\
		By Lemma \ref{wheel1}, no singleton set resolves the signed wheel and by Theorem \ref{gandsigma}, $\dim(W_{6}^{\sigma})=2$ [By Subcase 2 and Case B]. If the Subcase 2 and Case B do not hold then $\dim (W_{6}^{\sigma})=3$, so the central vertex can be an element of a basis of $W_{6}^{\sigma}$.
		\\
		For $n>6$, if $\dim(W_{n})=k$, then the underlying graph does not include the central vertex in its basis. So if we take $B=\{v_{0},v_{m_{1}},v_{m_{2}},...v_{m_{k-1}}\}$, then there exist $(v_{p},v_{q}) $ such that $ r(v_{p}|B)\ = r(v_{q}|B)$, for some $v_{p},v_{q}\in V$.
		\\
		If $\sigma(v_{0}v_{p}) \neq \sigma(v_{0}v_{q})$ for  $(v_{p},v_{q})$ satisfying $r(v_{p}|B)\ = r(v_{q}|B)$, then 
		$r_{\Sigma}(v_{i}|B)\ \neq r_{\Sigma}(v_{j}|B)$, $\forall \ v_{i}, v_{j} \in V$ and as such, $B$ resolves $W_{n}^\sigma$. But if we choose $B_{1}=B\setminus \{v_{0}\}$ in  graph satisfying the above condition that $\sigma(v_{0}v_{p}) \neq \sigma(v_{0}v_{q})$ for  $(v_{p},v_{q})$ where $r(v_{p}|B)\ = r(v_{q}|B)$, then $B_{1}$ resolves  $W_{n}^\sigma$. This shows that no basis of $W_{n}^\sigma$ can include the central vertex for $n>6$.
\end{proof}
The following theorem gives the metric dimensional difference $\mdd(W_{n}^\sigma)$.
 \begin{thm}\label{wl}
Let $W_{n}^\sigma=(W_{n},\sigma)$ be a compatible signed wheel where $n\ge 3$. Then $\mdd(W_{n}^\sigma)\le 1$.
\end{thm}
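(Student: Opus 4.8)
The plan is to reduce the claim to the inequality $\dim(W_n)\le\dim(W_n^\sigma)+1$, which is equivalent to $\mdd(W_n^\sigma)\le 1$ by the definition of $\mdd$ together with Theorem~\ref{gandsigma}. The small cases are disposed of first: for $n=4,5$ we have $\dim(W_n)=2$ by Theorem~\ref{wheel}, while Lemma~\ref{wheel1} gives $\dim(W_n^\sigma)\ge 2$, so $\dim(W_n^\sigma)=2$ and $\mdd=0$; for $n=3,6$ we have $\dim(W_n)=3$, and the case analysis behind Theorem~\ref{v0} shows $\dim(W_n^\sigma)\in\{2,3\}$, whence $\mdd\le 1$. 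So the substance lies in the range $n\ge 7$.

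For $n\ge 7$ I would start from a basis $W$ of $W_n^\sigma$ and, using Theorem~\ref{v0}, assume $W$ consists only of rim vertices. Partition $V(W_n)$ into \emph{unsigned classes} according to the unsigned distance vector to $W$; since the diameter is $2$, each coordinate lies in $\{0,1,2\}$. Because $W$ resolves $\Sigma$, any two vertices sharing an unsigned class are separated by a sign in some coordinate. The first key step is a structural lemma: every unsigned class has at most two vertices. The decisive subcase is the class of rim vertices lying at distance $2$ from every $w_i$. Here compatibility is essential: the shortest path from such a vertex $u$ to $w_i$ runs through the centre $v_0$ (and when a rim $2$-path also exists it is sign-coherent with it), so the signed entry equals $2\,\sigma(uv_0)\,\sigma(v_0w_i)$, i.e. $\sigma(uv_0)$ times a vector that is constant across the whole class. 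Two such vertices are therefore sign-separated only when their spoke signs $\sigma(uv_0)$ differ, and since this sign takes two values the class has at most two members. The remaining classes follow from elementary adjacency counting: a class meeting some $w_i$ at distance $1$ is forced to be a subset of $\{w_i-1,w_i+1\}$, and $v_0$ is alone in the all-ones class because no rim vertex is adjacent to three or more elements of $W$.

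Granting this lemma, the only obstruction to $W$ already resolving the underlying wheel is a family of size-two unsigned classes, each of the form $\{w_i-1,w_i+1\}$ around an ``isolated'' $w_i$, together with at most one all-distance-$2$ pair. I would then append a single vertex $x$ to $W$ and argue that $W\cup\{x\}$ resolves $W_n$: since the all-distance-$2$ pair is split by almost any rim vertex, the task reduces to splitting all the pairs $\{w_i-1,w_i+1\}$ simultaneously. Observing that a rim vertex $x$ splits $\{w_i-1,w_i+1\}$ exactly when $x\in\{w_i-2,\,w_i-1,\,w_i+1,\,w_i+2\}$, the final step would be to show that a minimum signed resolving set cannot generate such pairs around two $w_i$'s that are far apart on the rim, so that one well-chosen $x$ repairs them all at once.

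This last step is exactly where I expect the real difficulty to lie, and it is the assertion I would scrutinise most carefully. The clean ``one extra bit'' heuristic is genuine only for the all-distance-$2$ class; the pairs $\{w_i-1,w_i+1\}$ can in principle arise around many mutually distant rim vertices, and a single added vertex splits only those within cycle-distance $2$ of one $w_i$. Making the argument go through therefore demands a quantitative comparison between the admissible rim-gap patterns of a minimum signed resolving set and the stricter gap conditions that force the value of $\dim(W_n)$ in Theorem~\ref{wheel}. Controlling the number and spread of sign-separable size-two classes is the crux on which the bound $\mdd(W_n^\sigma)\le 1$ stands or falls, and I would examine candidate sparse rim placements (large families of isolated $w_i$ with alternating spoke signs) before trusting the inequality in full generality.
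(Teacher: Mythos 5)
Your small cases ($n=3,4,5,6$) are fine and match the paper. For $n\ge 7$, however, your argument is incomplete at precisely the point you flag yourself, and that point is not a technicality but the whole content of the theorem. Your structural lemma (each unsigned distance class of a set resolving $\Sigma$ has at most two members, because the only sign available to separate an all-distance-$2$ pair is the single spoke sign $\sigma(uv_0)$) is sound and is in fact the same pigeonhole that drives the paper's proof. But your strategy then requires that \emph{one} appended vertex $x$ split \emph{every} surviving size-two class simultaneously, and a rim vertex $x$ can only split a pair $\{w_i-1,w_i+1\}$ when $x$ lies within rim-distance $2$ of $w_i$. Nothing you prove rules out a minimum signed resolving set with two or more such isolated $w_i$'s far apart on the rim, each pair separated only by opposite spoke signs; in that configuration $W\cup\{x\}$ fails to resolve $W_n$ for every choice of $x$, and your route to $\dim(W_n)\le\dim(W_n^\sigma)+1$ collapses. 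Closing this would require a quantitative analysis of the admissible gap patterns of resolving sets of $W_n$ (from the characterization underlying Theorem~\ref{wheel}), which you do not supply.

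The paper argues in the opposite direction and thereby avoids having to repair a specific signed basis: it starts from an unsigned basis $B_1$ of size $k=\dim(W_n)$ (which resolves $\Sigma$ by Theorem~\ref{undres}), checks whether some $(k-1)$-set of rim vertices resolves $\Sigma$ after accounting for spoke signs, and then shows minimality by arguing that \emph{any} set of $k-2$ rim vertices leaves at least three vertices with unsigned representation $(2,2,\dots,2)$, two of which must share a spoke sign and hence collide in $\Sigma$. This yields $\dim(W_n^\sigma)\ge k-1$ directly, with no augmentation step. (You should note, though, that the paper's counting claim --- at least three vertices at distance $2$ from all of a $(k-2)$-set --- is itself delicate for large $n$, since $k-2$ rim vertices dominate up to $3(k-2)$ rim vertices; so the difficulty you identified does not disappear in the paper's formulation either, it merely changes shape.) As it stands, your proposal establishes the easy half ($\mdd\ge 0$ and the small cases) but leaves the main inequality for $n\ge 7$ unproven.
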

\begin{proof} For simplicity of notations, we use $G$ for denoting the underlying wheel and $\Sigma$ for the signed wheel and deal with the following cases depending on the values of $n$.
\\ \textbf{Case-1:} [ n=3 ] \\
Let V=$\{v_{0},v_{1},v_{2},v_{3}\} $\ be the vertex set. Then $\dim (G) =3$ from Theorem~\ref{wheel}. By Theorem~\ref{undres}, there exist a set with $3$ vertices which resolves $\Sigma$ also. Let $\{v_{a},v_{b},v_{c}\} $\ $\subseteq V $ resolve  $\Sigma$. If we take any set with two vertices, let it be $B=\{v_{a},v_{b}\}$\, then easy computations give the following values as the case may be.  
\\$\ r(v_{c}|B)=r(v_{d}|B)=(1,1)$ where $v_{c},v_{d} \in V$. Therefore, if $\ r_\Sigma(v_{c}|B)=(1,1)$\ and  $\ r_\Sigma(v_{d}|B) \in \{(1,-1) ,(-1,1) , (-1,-1)\} $ or $\ r_\Sigma(v_{c}|B)=(-1,-1)$\ and  $\ r_\Sigma(v_{d}|B)\in \{(1,-1)  ,(-1,1), (1,1)\}$  or $ r_\Sigma(v_{c}|B)=(-1,1)$\ and  $\ r_\Sigma(v_{d}|B) \in \{(1,1),(-1,-1),(1,-1)\}$ or $\ r_\Sigma(v_{c}|B)=(1,-1)$\ and  $\ r_\Sigma(v_{d}|B) \in \{(1,1), (-1,-1),(-1,1)\}$. These different sets of values provide
 \begin{equation} \label{eqq}
 	  r_\Sigma(v_{c}|B)\neq  \ r_\Sigma(v_{d}|B)
 \end{equation} making $\{v_{a},v_{b}\}$\ as a resolving set for $\Sigma$. By Lemma~\ref{wheel1},  as a singleton set does not resolve $\Sigma$, $\{v_{a} , v_{b}\}$\ acts as basis. If the above condition in \eqref{eqq} does not hold, then by Theorem \ref{gandsigma} $\dim(\Sigma)=3$. This proves that $\dim(W_{3}^\sigma)$ is either $2$ or $3$.
\\  \textbf{Case-2:}  [$n = 4,5$ ] \\
Here, $\dim (G) =2$. Therefore, there exist $B=\{v_{p},v_{q}\}\subseteq V$\  such that $\ r(v_{i}|B)\neq  \ r(v_{j}|B) ; \forall \ v_{i}, v_{j} \in V $. By Theorem \ref{undres} and Theorem \ref{v0}, this means $\ r_\Sigma(v_{i}|B)\neq  \ r_\Sigma(v_{j}|B) \ \forall  \ v_{i}, v_{j} \in V $. Also by Lemma \ref{wheel1}, singleton set does not resolve $\Sigma$ gives $\dim(W_{n}^\sigma)=2$ when $n=4,5$.
\\
\textbf{Case-3:}  [ $n=6$ ] \\
In this case, $\dim (G)=3$ and  letting $V=\{v_{0},v_{1},v_{2}....v_{6}\}$, there exists a basis $B_{1}=\{v_{k1},v_{k2},v_{k3}\}\subseteq V $  such that $ r(v_{i}|B_{1})\neq  \ r(v_{j}|B_{1}) ; $ for all $ v_{i},  v_{j} \in V $. So by Theorem \ref{undres}, $\ r_\Sigma(v_{i}|B_{1})\neq  \ r_\Sigma(v_{j}|B_{1}) \ \forall  \ v_{i}, v_{j} \in V $\
\\
If Subcase 2 of Case A and Case B of Theorem \ref{v0} holds, then $\dim(\Sigma)=2$. Otherwise $\dim(\Sigma) =3$. This proves that, $\dim(W_{6}^\sigma)$ is either $2$ or $3$.
\\
\textbf{Case-4} [ $n > 6$ ] \\
Let $V=\{v_{0},v_{1},v_{2},...v_{n}\}$ where $v_{0}$ is the central vertex and $\{v_{1},v_{2},...v_{n}\}$ are the rim vertices. If $\dim (G) =k$, then let $B_{1}=\{v_{i1},v_{i2},...v_{ik}\}\subseteq V\setminus\{v_{0}\}$ resolve $G$. By Theorem \ref{undres}, $B_{1}$ resolves $\Sigma$ too. If we take any set with $k-1$ vertices, $B_{2}=\{v_{j1},v_{j2},...v_{jk-1}\}\subseteq V\setminus\{v_{0}\}$ then $ r(v_{s}|B_{2})= \ r(v_{t}|B_{2})$ for some  $v_{s},v_{t}\in V$. Define $S=\{(v_{s},v_{t}) \| \  r(v_{s}|B_{2})= \ r(v_{t}|B_{2})\}$ and partition S into two subsets $S_{1}$ and $S_{2}$ such that  $S_{1}=\{(v_{s},v_{t})\in S|\sigma(v_{0}v_{s})=\sigma(v_{0}v_{t})\}$ and $S_{2}=\{(v_{s},v_{t})\in S|\sigma(v_{0}v_{s})\neq\sigma(v_{0}v_{t})\}$.
  \\ 
  \underline{Subcase-A} \\
If at least one $(v_{s},v_{t})\in S$ belongs to $S_{1}$ then $\ r_\Sigma(v_{s}|B_{2})=r_\Sigma(v_{t}|B_{2}) $. Therefore, $B_{2}$ does not resolve $\Sigma$ and along the same line of argument, it can be shown that no other set with cardinality less than $k-1$ resolves $\Sigma$. By theorem \ref{gandsigma} this proves $\dim(\Sigma)=k$.
   \\
   \underline{Subcase-B} \\
If $S=S_{2}$, then $ r_\Sigma(v_{s}|B_{2})\neq r_\Sigma(v_{t}|B_{2}) $  $\  \forall  (v_{s},v_{t})\in S$ provides $B_{2}$ to resolve $\Sigma$. To settle the minimality, if we take any set of $k-2$ vertices, $W_{3}=\{v_{l1},v_{l2},...v_{lk-2}\}\subseteq V\ \backslash \{v_{0}\}$ then $ r(v_{a}|B_{3})=  r(v_{b}|B_{3})=\underbrace{(2,2....2)}_{(k-2) times}$ for some $v_{a}$ and $v_{b} \in V$. Define $S_{3}=\{(v_{a},v_{b})\in V: r(v_{a}|B_{3})=  r(v_{b}|B_{3})\}$. Since $n>6, |S_{3}|>2$, and so there exist at least three vertices $v_{a},v_{b},v_{c}\in V$ such that  $ r(v_{a}|B_{3})=  r(v_{b}|B_{3})=r(v_{c}|W_{3})=\underbrace{(2,2....2)}_{(k-2) times}$. This gives, $ \sigma(v_{0}v_{a})=\sigma(v_{0}v_{b})$ or $ \sigma(v_{0}v_{a})=\sigma(v_{0}v_{c})$. So, $r_\Sigma(v_{a}|B_{3})= r_\Sigma(v_{b}|B_{3}) $ or $ r_\Sigma(v_{a}|B_{3})= r_\Sigma(v_{c}|B_{3})$. Hence, $B_{3}$ does not resolve $\Sigma$ and proceeding along the same lines, no set with cardinality less than $k-2$ resolves $\Sigma$. By Theorem \ref{gandsigma},  $\dim(\Sigma)=k$ or $k-1$.      
Combining all the cases, $\dim (W_{n}^\sigma)= \dim(W_n) $ or $\dim(W_n)-1$. In other words, $\mdd(W_{n}^\sigma)=0$ or $1$, which proves the theorem.
  \end{proof}
The figures that follow illustrate the case when $n=9$. The thick lines in all the figures represent positive edges and the dashed lines are negative edges. The blackened vertices belong to the basis in the order of their suffixes.
\begin{figure}[h!]
	\centering
	\begin{tikzpicture}
	\draw[thick](0,0)--(0,3)--(2.1,2.3)--(3,.7)--(2.8,-1.4)(1.2,-2.8)--(-1.2,-2.8)--(-2.8,-1.4)(-3,.7)--(-2.1,2.3)--(0,3)(0,0)--(2.1,2.3)(0,0)--(3,.7)(0,0)--(3,.7)(0,0)--(2.8,-1.4)(0,0)--(-3,.7)(0,0)--(-2.1,2.3);
	\draw[dashed](0,0)--(1.2,-2.8)(0,0)--(-1.2,-2.8)(0,0)--(-2.8,-1.4)(-2.8,-1.4)--(-3,.7)(2.8,-1.4)--(1.2,-2.8);
	\draw[fill=white](0,0)circle(3.5pt); 
	\draw[fill=black](0,3)circle (3.5pt); 
	\draw[fill=white](2.1,2.3)circle (3.5pt);
          \draw[fill=black](3,.7)circle(3.5pt);
          \draw[fill=white](2.8,-1.4)circle (3.5pt);
          \draw[fill=white](1.2,-2.8)circle (3.5pt); 
	\draw[fill=black](-1.2,-2.8)circle (3.5pt);
           \draw[fill=white](-2.8,-1.4)circle (3.5pt);
          \draw[fill=black](-3,.7)circle (3.5pt); 
	\draw[fill=white](-2.1,2.3)circle (3.5pt);
	\node at(0,-0.465){$v_0$};
	\node at(0.2,3.3){$v_1$};
	\node at(3.2,2.7){$v_2(1,1,-2,2)$};
	\node at(3.4,.7){$v_3$};
	\node at(4.2,-1.2){$v_4(2,1,-2,2)$};
	\node at(2.7,-3.1){$v_5(-2,-2,1,-2)$};
	\node at(-1.5,-3.1){$v_6$};
	\node at(-4.3,-1.7){$(-2,-2,1,-1)v_7$};
	\node at(-3.4,.7){$v_8$};
	\node at(-3.6,2.4){$(1,2,-2,1)v_9$};
	\end{tikzpicture}
\caption{$\dim(W_9^\sigma)=\dim(W_9)=4$, i.e., $\mdd(W_9^\sigma)=0$}
\label{fig1}
\end{figure}
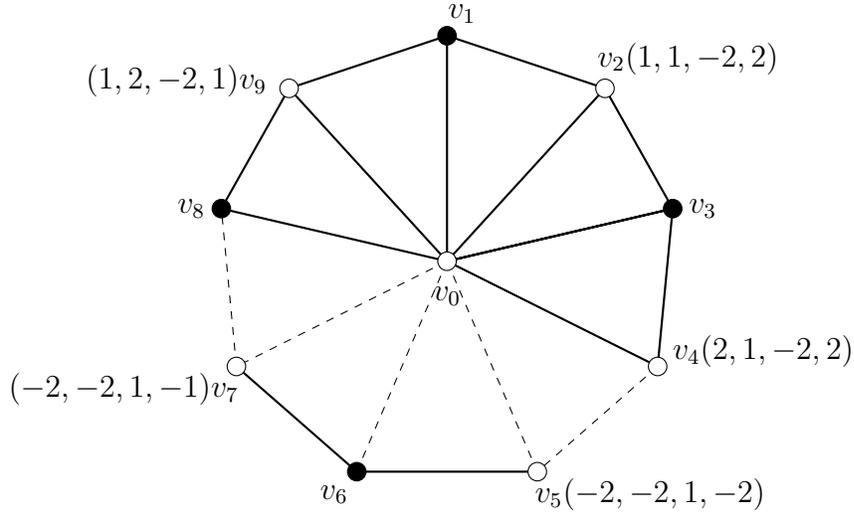
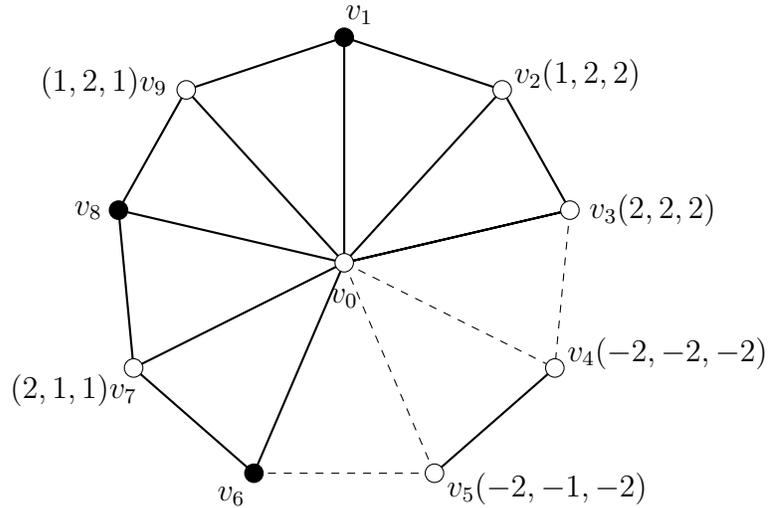
\begin{figure}[h!]
	\centering
	\begin{tikzpicture}
	
	\draw[thick](0,0)--(0,3)--(2.1,2.3)--(3,.7)(2.8,-1.4)(1.2,-2.8)(-1.2,-2.8)--(-2.8,-1.4)(-3,.7)--(-2.1,2.3)--(0,3)(0,0)--(2.1,2.3)(0,0)--(3,.7)(0,0)--(3,.7)(0,0)--(-3,.7)(0,0)--(-2.1,2.3)(0,0)--(-2.8,-1.4)(0,0)--(-1.2,-2.8)(2.8,-1.4)--(1.2,-2.8)(-2.8,-1.4)--(-3,.7);
	\draw[dashed](0,0)--(1.2,-2.8)(0,0)--(2.8,-1.4)(3,.7)--(2.8,-1.4)(1.2,-2.8)--(-1.2,-2.8);
	\draw[fill=white](0,0)circle(3.5pt); 
	\draw[fill=black](0,3)circle (3.5pt); 
	\draw[fill=white](2.1,2.3)circle (3.5pt);
          \draw[fill=white](3,.7)circle(3.5pt);
          \draw[fill=white](2.8,-1.4)circle (3.5pt);
          \draw[fill=white](1.2,-2.8)circle (3.5pt); 
	\draw[fill=black](-1.2,-2.8)circle (3.5pt);
           \draw[fill=white](-2.8,-1.4)circle (3.5pt);
          \draw[fill=black](-3,.7)circle (3.5pt); 
	\draw[fill=white](-2.1,2.3)circle (3.5pt);
	
	\node at(0,-0.4672){$v_0$};
	\node at(0.2,3.3){$v_1$};
	\node at(3.1,2.5){$v_2(1,2,2)$};
	\node at(4.1,.7){$v_3(2,2,2)$};
	\node at(4.3,-1.2){$v_4(-2,-2,-2)$};
	\node at(2.7,-3){$v_5(-2,-1,-2)$};
	\node at(-1.5,-3.1){$v_6$};
	\node at(-3.6,-1.7){$(2,1,1)v_7$};
	\node at(-3.4,.7){$v_8$};
	\node at(-3.2,2.4){$(1,2,1)v_9$};	
	\end{tikzpicture}
\caption{$\dim(W_9^\sigma)=3\neq \dim(W_9)=4$ \ \ i.e., $\mdd(W_9^\sigma)=1$}
\label{fig2}
\end{figure}

\section{Metric dimension of signed trees}\label{s3}
We now turn our attention to find the metric dimension of signed trees. Though we do not get a result for a general signed tree, Theorem~\ref{stree} deals with signed trees with some specific conditions. In the case of unsigned trees, there is a formula for finding the metric dimension given in ~\cite{chartrand, slater} which is stated in Theorem~\ref{tree}. But before delving into the details we shall recall some important definitions.
\\A \textit{leaf} of a tree is a vertex of degree $1$. A vertex of degree at least $3$ in a tree $T$ is called a \textit{major\ vertex}. A leaf $u$ of a tree $T$ is said to be a \textit{terminal\ vertex} of a major vertex $v$ of $T$ if $d(u,v)<d(u,w)$ for every other vertex $w$ of $T$. The \textit{terminal\ degree} $\ter(v)$ of a major vertex $v$ is the number of terminal vertices of $v$. We denote the totality of all terminal degrees in a tree $T$ by $\lambda(T)$. A major vertex $v$ of $T$  is an \textit{exterior\ major\ vertex} of $T$ if it has terminal degree greater than 0. Number of exterior major vertices is denoted by $\ext(T)$. A \textit{leg} of a tree is a path from an exterior major vertex to its terminal vertex. Let $t_k$ be an exterior major vertex of $T^{\sigma}$. By $L_{k,j}$, we mean the set of all vertices on the $j^{th}$ leg of $t_k$ and by the index $ i $ of a vertex $u_i \in L_{k,j}$, we mean $ d(t_k,u_i)=i$.
\begin{thm}[\cite{chartrand, slater}]\label{tree}
 If $ T $ is a tree, which is not a path, then $ \dim(T)=\lambda(T)-\ext(T) $.
\end{thm}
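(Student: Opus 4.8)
The plan is to establish the two inequalities $\dim(T)\ge\lambda(T)-\ext(T)$ and $\dim(T)\le\lambda(T)-\ext(T)$ separately. The governing quantity is, for each exterior major vertex $v$, the number $\ter(v)-1$, because $\sum_{v}\bigl(\ter(v)-1\bigr)=\lambda(T)-\ext(T)$, where the sum ranges over the $\ext(T)$ exterior major vertices. The natural candidate for a basis is the set $W$ obtained by selecting, for every exterior major vertex $t_k$, the terminal vertices of all but one of its legs $L_{k,j}$; this $W$ has exactly $\lambda(T)-\ext(T)$ elements, and the whole theorem reduces to showing that no smaller set can resolve $T$ and that this particular $W$ does resolve $T$.

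For the lower bound I would use a symmetry argument along sibling legs. Fix an exterior major vertex $t_k$ and two of its legs, and let $x$ and $y$ be the vertices on these two legs at a common index $m\ge 1$ (so $d(t_k,x)=d(t_k,y)=m$, with $m$ at most the length of the shorter leg). Since $T$ is a tree and the two legs meet only at $t_k$, every shortest path from any vertex $w$ that lies on neither of these two legs (save possibly at $t_k$) to $x$, respectively to $y$, passes through $t_k$, so $d(w,x)=d(w,t_k)+m=d(w,y)$. Hence $x$ and $y$ receive identical representations with respect to any landmark set containing no vertex other than $t_k$ from these two legs. Consequently a resolving set must contain a vertex distinct from $t_k$ on all but at most one leg of each $t_k$, contributing at least $\ter(t_k)-1$ landmarks; since the non-major vertices on legs of distinct exterior major vertices form disjoint sets, these contributions add, giving $\dim(T)\ge\lambda(T)-\ext(T)$.

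For the upper bound I would verify that the explicit set $W$ resolves $T$, arguing by a case analysis on the locations of two distinct vertices. Along each retained leg, the coordinate toward its terminal landmark decomposes additively as distance-to-the-major-vertex plus the fixed leg length, so that landmark records positions on its leg exactly. Two vertices on the same leg are then separated by that leg's own landmark; for the single omitted leg of a given $t_k$, a vertex at index $i$ is separated from the equidistant vertex on a retained sibling leg because their distances to that sibling's terminal landmark differ by $2i$. Two vertices hanging from different exterior major vertices are separated because their distance profiles to the landmarks record different major vertices, and vertices lying on no leg (the ``core'' paths joining the major vertices) are separated since their distances to the terminal landmarks encode their core position uniquely.

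The main obstacle is the upper-bound case analysis, and within it the most delicate point is the single omitted leg at each exterior major vertex: one must check that every vertex on that omitted leg is still separated, using only landmarks on the other legs, both from the equidistant vertices on its retained sibling legs and from vertices elsewhere in the tree. Everything else is routine bookkeeping with the additive distance formula through the relevant major vertex. Once the omitted-leg subcase is settled, combining the two inequalities yields $\dim(T)=\lambda(T)-\ext(T)$.
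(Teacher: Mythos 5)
This theorem is stated in the paper as a cited result from the literature (Chartrand et al.\ and Slater); the paper supplies no proof of its own, so there is nothing internal to compare your argument against. On its own terms, your outline is the standard proof of this classical fact and its skeleton is sound. The lower bound is essentially complete: the twin argument for sibling legs (already at index $m=1$) correctly shows that every resolving set must meet all but one leg of each exterior major vertex $t_k$ in a vertex other than $t_k$, and your disjointness observation (legs of distinct exterior major vertices share no non-major vertices, since an internal leg vertex has degree $2$ and a leaf has a unique nearest major vertex) justifies summing the contributions $\ter(t_k)-1$ to get $\lambda(T)-\ext(T)$. The upper bound, however, is only a plan: the within-branch subcases you work out (same leg; omitted leg versus retained sibling leg at equal index, separated by $2i$) are correct, but the cross-branch cases are asserted rather than proved. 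The statement that vertices in different branches are separated because their profiles ``record different major vertices,'' and that core vertices are ``encoded uniquely,'' each need the two-landmark triangulation argument: if $u$ and $v$ attach to the core at different points, a landmark beyond $u$'s attachment point and a landmark beyond $v$'s give discrepancies of opposite sign, so both equalities cannot hold simultaneously; one must also handle exterior major vertices of terminal degree $1$, which contribute no landmark at all, so their leg vertices must be resolved entirely by landmarks in other branches. None of this fails --- it is exactly the bookkeeping you flag --- but as written the upper bound is a correct roadmap rather than a proof.
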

To deal with a special class of signed trees for which we derive a formula for their metric dimension,  we define one more type of exterior major vertices as follows.
\begin{defn}(Special exterior major vertex of a signed tree)
An exterior major vertex $t_{k}$ in a signed tree $T^\sigma$ is called a special exterior major vertex if  $d_{\Sigma }(t_k,u_i)\neq d_{\Sigma }(t_{k},v_i)$  for all $u_i \in L_{k,n}$ and $v_i \in L_{k,m} $, for $n \neq m$. The set of all special exterior major vertices is denoted as $\eta(T^\sigma)$.
\end{defn}
\begin{thm}\label{stree}
	Let $T^\sigma$ be a signed tree (which is not a path) such that $\ter(t_k) \neq 2 $ for all $ t_k \in \eta(T^{\sigma}) $. Then $\dim(T^\sigma)=\dim(T)-|\eta (T^\sigma)|$. In other words, in the case of such trees $T^{\sigma}$, $\mdd(T^{\sigma})=|\eta (T^\sigma)|$.
\end{thm}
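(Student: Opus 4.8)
The plan is to reduce the question to the unsigned structure behind Theorem~\ref{tree} and then to account exactly for the landmarks that the signature lets us discard. First I would exploit that a tree is geodetic, so every signed distance satisfies $d_{\Sigma}(x,w)=\pm\, d(x,w)$; hence two vertices receive equal coordinates at a landmark $w$ in $T^{\sigma}$ precisely when they have equal unsigned distance to $w$ \emph{and} the two connecting paths carry the same sign. In particular $r_{\Sigma}(x\mid W)=r_{\Sigma}(y\mid W)$ forces $r_{G}(x\mid W)=r_{G}(y\mid W)$, and in a tree the only unsigned-indistinguishable pairs are the \emph{parallel} vertices: an index-$i$ vertex $u_i\in L_{k,n}$ and an index-$i$ vertex $v_i\in L_{k,m}$ lying on two distinct legs ($n\neq m$) of one exterior major vertex $t_k$. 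Thus resolving $T^{\sigma}$ reduces to separating, at each $t_k$, all such parallel pairs, and the count $\dim(T)=\lambda(T)-\ext(T)=\sum_{t_k}\big(\ter(t_k)-1\big)$ records that unsignedly one must mark all but one leg at every $t_k$.

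Next I would isolate the effect of the signs at a fixed exterior major vertex $t_k$. For parallel $u_i,v_i$ on legs $n\neq m$, any landmark $w$ off both legs is reached from either vertex through $t_k$, so
\begin{equation*}
d_{\Sigma}(u_i,w)=\sigma\big(P(t_k,u_i)\big)\,\sigma\big(P(t_k,w)\big)\,\big(i+d(t_k,w)\big),
\end{equation*}
together with the identical expression for $v_i$; these two values differ exactly when $\sigma\big(P(t_k,u_i)\big)\neq\sigma\big(P(t_k,v_i)\big)$, i.e.\ when legs $n$ and $m$ are sign-opposite at index $i$. Hence a family $F$ of legs of $t_k$ may be left landmark-free iff every two legs of $F$ are sign-opposite at each common index; since index $1$ lies on every leg and only two signs are available, $|F|\le 2$, with $|F|=2$ possible exactly when $t_k$ carries a complementary (special) pair. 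Therefore one needs marks on $\ter(t_k)-2$ legs when $t_k\in\eta(T^{\sigma})$ and on $\ter(t_k)-1$ legs otherwise, so every special exterior major vertex saves exactly one landmark and never more. Marking all but one leg at each $t_k$ (all but a complementary pair when $t_k\in\eta$) then yields, via the displayed identity for the freed pairs and the standard tree argument for non-parallel pairs, a resolving set of size $\dim(T)-|\eta(T^{\sigma})|$, giving $\dim(T^{\sigma})\le\dim(T)-|\eta(T^{\sigma})|$.

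For the reverse inequality I would argue minimality leg-by-leg: a set meeting fewer than $\ter(t_k)-2$ legs at some $t_k\in\eta$, or fewer than $\ter(t_k)-1$ legs at some $t_k\notin\eta$, leaves at that vertex three, respectively two, unmarked legs that are not all mutually complementary, so by the index-$1$ pigeonhole some parallel pair stays unseparated; summing these local bounds gives $\dim(T^{\sigma})\ge\dim(T)-|\eta(T^{\sigma})|$. The delicate point, where the hypothesis $\ter(t_k)\neq2$ enters and which I expect to be the main obstacle, is that these per-vertex savings must be \emph{simultaneously} realizable without violating the global floor $\dim(T^{\sigma})\ge1$ from Theorem~\ref{gandsigma}. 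A special vertex with $\ter(t_k)=2$ would demand $0$ marks on its two legs, and a tree assembled only from such vertices would be forced to the impossible value $0$; excluding $\ter(t_k)=2$ guarantees each special vertex still retains $\ter(t_k)-2\ge1$ landmark, so the constructed set is truly resolving, the local bounds add without interference, and $\dim(T)-|\eta(T^{\sigma})|\ge1$. Verifying this non-interference—that freeing a complementary pair at one exterior major vertex never revives an unresolved parallel pair at another—is the heart of the proof and completes $\mdd(T^{\sigma})=|\eta(T^{\sigma})|$.
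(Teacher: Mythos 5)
Your construction and counting argument are the same as the paper's: start from the standard unsigned basis of Theorem~\ref{tree}, delete one additional terminal vertex at each special exterior major vertex, justify that two legs of a single $t_k$ can be left landmark-free exactly when they are sign-opposite at every common index, and use the pigeonhole on the two available signs at index $1$ to show that three legs can never be freed simultaneously. Your explicit identity $d_{\Sigma}(u_i,w)=\sigma(P(t_k,u_i))\,\sigma(P(t_k,w))\,(i+d(t_k,w))$ is a welcome sharpening of what the paper leaves implicit, and your leg-by-leg lower bound matches the paper's minimality argument.

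Where you go astray is in your diagnosis of the hypothesis $\ter(t_k)\neq 2$. The danger is not that the global count could drop to $0$; it is local, and it undermines your earlier claim that the only unsigned-indistinguishable pairs are parallel pairs on two legs of one exterior major vertex. That claim is only safe as long as every exterior major vertex retains at least one landmark on one of its legs. If a special $t_k$ has $\ter(t_k)=2$, your recipe removes \emph{all} terminal vertices of $t_k$, and then every remaining landmark is reached from the entire subtree hanging at $t_k$ through the single edge leading out of it; this creates fresh collisions between vertices near $t_k$ and vertices elsewhere that are not parallel pairs at any common exterior major vertex. Concretely, take $t_1$ adjacent to leaves $p_1,q_1$ and to $b$, with $b$ adjacent to $t_2$, and $t_2$ adjacent to leaves $p_2,q_2,c$. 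Here $\ter(t_1)=2$, $\ter(t_2)=3$, $\dim(T)=3$. If $\sigma(t_1p_1)\neq\sigma(t_1q_1)$ then $t_1\in\eta(T^{\sigma})$ and the formula would predict a basis such as $\{p_2,q_2\}$ of size $2$; but $b$ and $c$ both lie at unsigned distance $2$ from $p_2$ and from $q_2$, and whenever $\sigma(bt_2)=\sigma(ct_2)$ they receive identical signed representations, so the set fails. The hypothesis $\ter(t_k)\neq 2$ guarantees that each special vertex keeps $\ter(t_k)-2\geq 1$ of its terminal vertices as landmarks, which is exactly what protects the reduction to parallel pairs on which both your argument and the paper's rest. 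With that correction your proof aligns with the paper's; note that the paper itself never makes this role of the hypothesis explicit.
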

\begin{proof} Let $\{t_{1},t_{2},...t_{n}\}$ be the exterior major vertices of $T^{\sigma}$ and let $ W_i $ denote be set of the terminal vertices $\{t_{i1},t_{i2},...t_{im_{i}}\}$ from $t_{i}$ for $1\le i\le n$. Denote $W = \cup W_i$. By Theorem~\ref{tree}, $\dim(T)=\lambda (T)-\ext(T)$ and the set formed from $W$ by removing  one terminal vertex  corresponding to each $t_{i}$, is a basis for $T$, and by Theorem \ref{undres}, it resolves $T^\sigma$ also. Now we modify this resolving set  to a basis of $T^{\sigma}$ by removing terminal vertices in a specific way as follows.\\Choose an exterior major vertex $t_k$ of $T^{\sigma}$. Then the following cases arise.
	\\ $\boldsymbol{Case (i)}$: $t_{k} \in \eta(T^{\sigma})$. In this case, there exist two terminal vertices $t_{kn}$ and  $t_{km}$ of  $t_{k}$ such that
	$d_{\Sigma }(u_{i},t_{k})\neq d_{\Sigma }(v_{i},t_{k})$  $\forall $ $ u_{i}\in L_{kn}$  and  $ \forall \ v_{i} \in L_{km}$. Therefore, we can remove $t_{kn} $ and $   t_{km} $  from $ W $. To prove the minimality, if we remove one more terminal vertex $t_{kl}$, say, of $t_{k}$ then 
	$d_{\Sigma }(t_{k},u_{1})= d_{\Sigma }(t_{k},v_{1})$  for $ u_{1}\in L_{kl}$  and  $  v_{1} \in L_{kn}$ or $L_{km}$. So we can remove two terminal vertices for a  $t_k \in  \eta(T^{\sigma}$) and we fix them as $t_{km} $ and $t_{kn}$.
	\\ $\boldsymbol{Case (ii)}$: $t_k \notin \eta(T^{\sigma})$. Here, there exist two legs $L_{kn}$ and $L_{km}$ of $t_k$  such that $ d_{\Sigma }(u_{i},t_{k}) =d_{\Sigma }(v_{i},t_{k})$  for some  $ u_{i}\in L_{kn}$  and  $ v_{i} \in L_{km}$. Hence we can remove at most one terminal vertex if $t_k \notin \eta(T^{\sigma}$) and choose it arbitrarily. Thus we can conclude that if $W^*$ is the set formed from $W$ by removing  two  terminal vertices as in Case (i), if $t_k \in \eta(T^{\sigma}) $ and remove one terminal vertex arbitrarily if $t_k \notin \eta(T^{\sigma})$, forms a basis of $T^{\sigma}$. Hence,   
  $$\dim(T^{\sigma})=|W^*|= \lambda(T)-\ext(T)-|\eta(T^{\sigma})|=\dim(T)-|\eta(T^{\sigma})|$$.	
\end{proof}
We have $|\eta(K_{1,n}^{\sigma})|=1$, if $ K_{1,n}^{\sigma}$ is non-homogeneous. So Theorem \ref{star} becomes a particular case of Theorem \ref{stree}. We provided a detailed proof there to show how one could select the resolving set of a signed star.
\\Generally, the following bounds hold good for any signed tree.
\begin{thm}
	Let $T^\sigma$ be a signed tree, then $\dim(T)-\ext(T)\leq \dim(T^{\sigma})\leq \dim(T)$. In other words, $\mdd(T^\sigma)\le \ext(T)$.
\end{thm}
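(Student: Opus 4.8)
The plan is to treat the two inequalities separately. The upper bound $\dim(T^\sigma)\le\dim(T)$ is immediate from Theorem~\ref{gandsigma} applied to $T^\sigma=(T,\sigma)$, so all the effort will go into the lower bound $\dim(T^\sigma)\ge\dim(T)-\ext(T)$. (The case where $T$ is a path is trivial, since then $\ext(T)=0$ and a signed path has dimension $1$ by the earlier result; so I assume $T$ is not a path, whence Theorem~\ref{tree} applies.) My strategy is to reduce the global lower bound to a local statement at each exterior major vertex and then sum. Writing $t_1,\dots,t_r$ for the exterior major vertices with $r=\ext(T)$, Theorem~\ref{tree} gives $\dim(T)=\lambda(T)-\ext(T)=\sum_k\bigl(\ter(t_k)-1\bigr)$, so that $\dim(T)-\ext(T)=\sum_k\bigl(\ter(t_k)-2\bigr)$. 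It therefore suffices to show that any resolving set $W^\ast$ of $T^\sigma$ must \emph{meet} (contain a vertex, other than $t_k$, of) at least $\ter(t_k)-2$ of the legs of each $t_k$.

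The heart of the argument is the local claim that at most two legs of a given $t_k$ can avoid $W^\ast$. I would prove this by contradiction: if three legs $L_{k,a},L_{k,b},L_{k,c}$ miss $W^\ast$, look at the index-$1$ vertices $u,v,z$ on them, whose signs $\sigma(t_ku),\sigma(t_kv),\sigma(t_kz)$ lie in $\{+1,-1\}$; by pigeonhole two agree, say $\sigma(t_ku)=\sigma(t_kv)$. The point is then that for \emph{every} $w\in W^\ast$ the unique $u$–$w$ and $v$–$w$ paths both route through $t_k$, so $d(u,w)=1+d(t_k,w)=d(v,w)$ and, factoring the path sign, $\sigma(P(u,w))=\sigma(t_ku)\,\sigma(P(t_k,w))=\sigma(t_kv)\,\sigma(P(t_k,w))=\sigma(P(v,w))$. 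Hence $d_\Sigma(u,w)=d_\Sigma(v,w)$ for all $w\in W^\ast$, so $r(u\mid W^\ast)=r(v\mid W^\ast)$, contradicting that $W^\ast$ resolves $T^\sigma$.

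To finish I would assemble the local counts globally. Because the legs of distinct exterior major vertices are vertex-disjoint and those of a single $t_k$ meet only at $t_k$, every vertex of $W^\ast$ lies on at most one leg; hence the number of legs met by $W^\ast$, summed over all $t_k$, is at most $|W^\ast|=\dim(T^\sigma)$. Combining this with the local bound yields
\[
\dim(T^\sigma)\ \ge\ \sum_{k}\bigl(\ter(t_k)-2\bigr)\ =\ \dim(T)-\ext(T),
\]
equivalently $\mdd(T^\sigma)\le\ext(T)$. The main obstacle is the local claim, and specifically checking that the two equal-sign index-$1$ vertices stay unresolved by \emph{every} element of $W^\ast$—including elements sitting on the other, met legs of the same $t_k$ (and even $t_k$ itself, for which $d(t_k,w)=0$ and the empty path has sign $+1$). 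This is exactly what routing each comparison through $t_k$ and splitting the path sign as $\sigma(t_k\,\cdot)\,\sigma(P(t_k,w))$ secures; the disjointness of legs is the routine but necessary bookkeeping that lets the local counts add up to $|W^\ast|$.
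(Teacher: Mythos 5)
Your proposal is correct, and it takes a genuinely different --- and in fact more rigorous --- route than the paper's. The paper proves the lower bound ``top-down'': it starts from the set of all terminal vertices and argues, as in the proof of Theorem~\ref{stree}, that at most two terminal vertices per exterior major vertex may be removed, concluding that a basis of $T^\sigma$ has at least $\lambda(T)-2\ext(T)$ elements. Taken literally, that only bounds resolving sets which are subsets of the terminal vertices, whereas a basis of $T^\sigma$ need not consist of terminal vertices at all. Your argument is ``bottom-up'' and closes exactly this gap: you show that an \emph{arbitrary} resolving set $W^\ast$ must meet all but at most two legs of each exterior major vertex $t_k$ (pigeonhole on the signs $\sigma(t_ku)$ of the index-$1$ vertices, plus the fact that every comparison path routes through $t_k$, so its sign factors as $\sigma(t_ku)\,\sigma(P(t_k,w))$), and then you count using the disjointness of the legs. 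The two points you flag as bookkeeping are indeed the only things to verify: each leg contains no major vertex other than its own $t_k$ and has internal vertices of degree $2$ (this is what makes distinct legs disjoint away from their exterior major vertices and forces every $u$--$w$ path with $w$ off the leg through $t_k$), and for $\ter(t_k)\le 2$ the term $\ter(t_k)-2$ is nonpositive, so the local claim is vacuous there and the summation still goes through. What your approach buys is a genuine lower bound valid for all resolving sets; what the paper's buys is brevity, by reusing the deletion analysis of Theorem~\ref{stree}.
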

\begin{proof}
From Theorem \ref{gandsigma},		
	\begin{equation} \label{eqa 1}
		\dim(T^{\sigma}) \leq\ \dim(T) 
	\end{equation}
As in the proof of Theorem~\ref{stree}, we can remove at most two terminal vertices with respect to each one of the exterior major vertices. Therefore, $\  T^{\sigma}$ has at least $\lambda(T)-2\ext(T)$ elements in its basis. So,
	\begin{equation} \label{eq}
	 \lambda(T)-2\ext(T) \leq \dim(T^{\sigma})
	\end{equation} Therefore, combining results in \eqref{eqa 1} and \eqref{eq}, $\dim(T)-\ext(T)\leq \dim(T^{\sigma})\leq \dim(T)$.
\end{proof}
The signed trees, one case of which is exhibited in the following figure, form an interesting class with the metric dimension as small as $2$ where as the unsinged underlying tree has a much greater metric dimension. 

\begin{figure}[h]
		\centering
	
		\begin{tikzpicture}
		\node at(1.2,-.3){$u$};
		\node at(13.2,-.3){$v$};
		
		\draw[thick](0,1.5)--(2,1.5)--(4,1.5)--(6,1.5)--(8,1.5)--(10,1.5)--(12,1.5)
		(-0.1,1.5)--(1,3)(4,1.5)--(5,3)(6,1.5)--(7,3)(8,1.5)--(9,3)(10,1.5)--(11,3)(2,1.5)--(3,0)(12,1.5)--(13,0);
		\draw[dashed](12,1.5)--(13,3)(2,1.5)--(3,3)(0,1.5)--(1,0)(4,1.5)--(5,0)(6,1.5)--(7,0)(8,1.5)--(9,0)(10,1.5)--(11,0);
		
		\draw[fill=white](-0.1,1.5)circle(3pt);
	    \draw[fill=white](2,1.5)circle (3pt);
		\draw[fill=white](4,1.5)circle (3pt);	
		\draw[fill=white](6,1.5)circle (3pt);	
		\draw[fill=white](8,1.5)circle (3pt);	
		\draw[fill=white](10,1.5)circle (3pt);
		\draw[fill=white](12,1.5)circle (3pt);	
		\draw[fill=white](1,3)circle (3pt);
		\draw[fill=black](1,0)circle (3pt);	
		\draw[fill=white](3,3)circle (3pt);	
		\draw[fill=white](3,0)circle (3pt);	
		\draw[fill=white](5,3)circle (3pt);
		\draw[fill=white](5,0)circle (3pt);	
		\draw[fill=white](7,3)circle (3pt);	
		\draw[fill=white](7,0)circle (3pt);	
		\draw[fill=white](9,3)circle (3pt);	
		\draw[fill=white](9,0)circle (3pt);	
		\draw[fill=white](11,3)circle (3pt);
		\draw[fill=white](11,0)circle (3pt);
		\draw[fill=white](13,3)circle (3pt);
		\draw[fill=black](13,0)circle (3pt);
	\end{tikzpicture}\\
		\hspace{3 cm}		
		\caption{ $\dim(T^\sigma)=2;\ \  \dim(T)=7$ i.e., $\mdd(T^\sigma)=5$}
		\label{fig3}
	\end{figure}
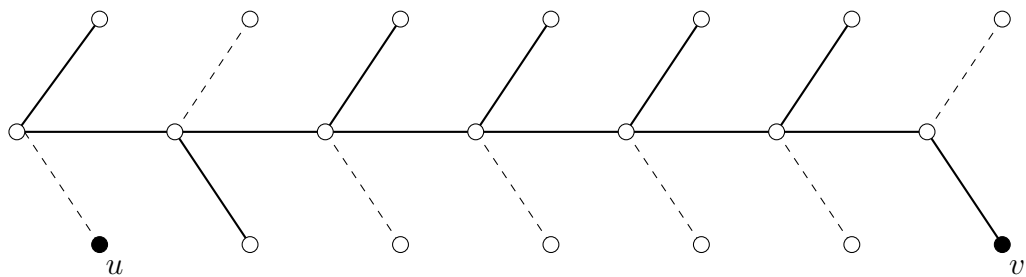

\section*{References}
\begin{enumerate}
     \bibitem{r1} Buczkowski, P., Chartrand, G., Poisson, C., Zhang, P., On k-dimensional graphs and their bases. Periodica Mathematica Hungarica, 46 (1) (2003) 9--15.	
	\bibitem{chartrand} G. Chartrand, L. Eroh, M.A. Johnson, O.R. Oellermann, Resolvability in graphs and the metric dimension of a graph.
	Discrete Appl. Math., 105 (2000) 99--113.
    \bibitem{harary} F. Harary, R. A. Melter,  On the metric dimension of a graph.  Ars Combin., 2 (1976) 191--195.
	\bibitem{balance} F. Harary, On the notion of balance of a signed graph.  Michigan Math.\ J.\ 2 (1953--1954) 143--146.
     \bibitem{haup} Hauptmann M., Schmied R., Viehmann C., Approximation complexity of metric dimension problem. Journal of Discrete Algorithms, 14  (2012) 214--222.
      \bibitem{kuller} Khuller S., Raghavachari B., Rosenfeld A.,  Landmarks in graphs. Discrete Applied Mathematics, 70 (3) (1996) 217--229.
	\bibitem{slater} Slater, P. J., Leaves of trees.	Congr. Numer., 14 (1975) 549--568.
     \bibitem{slater1} Slater, P. J., Dominating and reference sets in a graph. Journal of Mathematical and Physical Sciences, 22 (4)(1988) 445--455.
	\bibitem{sdist} Shahul Hameed K., Shijin T. V., Soorya P., Germina K. A., and T. Zaslavsky, Signed distance in signed graphs. {Linear Algebra Appl.,} {608} (2021) 236--247.
	\bibitem{shan} Shanmukha B., Sooryanarayana B., and Harinath K.S. Metric dimension of wheels. Far East J. Appl. Math., 8 (3) (2002) 217--229.
	\bibitem{sdist1} Shijin T. V., Soorya P., Shahul Hameed K., and Germina K. A., Signed Distance in Product of Signed Graphs. (communicated).
	\bibitem{tz1} T.\ Zaslavsky, Signed graphs,  Discrete Appl.\ Math.\ 4 (1982) 47--74.  Erratum,  Discrete Appl.\ Math.\ 5 (1983) 248.
\end{enumerate}
\end{document}